\documentclass{article}
	\usepackage{amsfonts}
	\usepackage{amsmath}
	\usepackage{amsthm}
	\newtheorem{thm}{Theorem}[section]
	\newtheorem{cor}[thm]{Corollary}
	\newtheorem{lem}[thm]{Lemma}
	
	\theoremstyle{definition}
	\newtheorem{defn}[thm]{Definition}
	\newtheorem{rem}[thm]{Remark}
	\newtheorem{ex}{Example}
	\numberwithin{equation}{section}
	\usepackage{cite}
	\usepackage{authblk}
	\date{}
	\begin{document}
		\title
		{The Uncertainty Principles of Quaternion Fractional Fourier Transform }
		\author{Ke Cui$^{1}$, Haipan Shi\thanks{Corresponding author. E-mail: shihaipan226@163.com} and Xiaomin Tang$^{3}$ }
		\maketitle
		\begin{abstract}
		In this paper, we mainly establish the uncertainty principle (UP) for a function and its quaternion Fractional Fourier transform (QFrFT), as well as the UP for two QFrFTs. Using the polar representation of quaternion-valued signals, we give the UP for QFrFT in both the spatial and directional domains, providing a more precise condition for equality, example is given to verify the results. Furthermore, we extend the time-frequency UP to a frequency-frequency setting.

		\noindent \textbf{Keywords:} Uncertainty Principles, Quaternion 	Fractional Fourier transform, Covariance.
	\end{abstract}
	\section{Introduction}
The Uncertainty Principle (UP), proposed by Heisenberg \cite{8},
originates from the observation that the position and momentum of a particle cannot be simultaneously determined with  infinite precision. Later, Weyl \cite{15} showed that UP could be expressed by the standard deviation and presented a widely used proof relying on the Schwarz inequality. Then The UP was further generalized to arbitrary variables by Condon \cite{2}.
 Gabor\cite{5}  introduced it into the field of signal analysis, which led to lots of research in signal processing. In the view of physics, the UP reveals a fundamental law of the microscopic world which points out that some physical properties of a particle cannot be measured with unlimited precision at the same time. This principle ensures the stability of atomic structures and explains why quantum systems possess a lowest energy state.

 Mathematically, for the function \(f(t)\in L^{2}(\mathbb{R})\) with \(\|f\|_{L^{2}}=1,\) there holds
 \begin{equation}
 \sigma_t^2 \sigma_\omega^2 \ge \frac{1}{4},
 \end{equation}
 where \[
 \sigma_t^2 = \int_{-\infty}^{\infty} \left| (t - \langle t \rangle) f(t)\right| ^2 dt
 \]
 and
 \[
 \sigma_w^2 = \int_{-\infty}^{\infty} \left| (w - \langle w \rangle)\hat {f}(w)\right| ^2 dw
 \]
 denote the variances in the time and frequency domains, respectively. Here
 for time variable  \(t\) and the Fourier frequency  \(w\),
\[
\langle t \rangle := \int_{-\infty}^{\infty} t  \left| f(t)\right| ^2 \, dt
\]
and
\[
\langle w \rangle := \int_{-\infty}^{\infty} w  \left| \hat {f}(w)\right| ^2 \, dw
\]
represent the mean values of the \(t\)
 and  \(w\), respectively. Moreover,
 \(\hat {f}(w)\) is the Fourier transform (FT) of \(f(t)\).

For the complex signal \(f(t)\), which is known that \(f(t)=\rho (t)e^{i\theta(t)}\), Cohen\cite{3} gave a lower bound larger than (1.1), i.e.
\begin{equation}
\sigma_t^2 \sigma_\omega^2 \ge \frac{1}{4}+Cov^{2},
\end{equation}
where
\[
Cov: = \int_{-\infty}^{\infty}  \langle t\theta'(t) \rangle-\langle t \rangle\langle w \rangle =\int_{-\infty}^{\infty}t\theta'(t)\left| f(t)\right| ^{2}dt-\langle t \rangle\langle w \rangle
\]
is the covariance of  \(f(t)\). After that, Dang, Deng and Qian in \cite{4} gave a stronger result, i.e.
\begin{equation}
\sigma_t^2 \sigma_\omega^2 \ge \frac{1}{4}+COV^{2},
\end{equation}
where
\[
COV := \int_{-\infty}^{\infty}\left|  \bigl( t - \langle t \rangle) \bigl( \theta'(t) - \langle \omega \rangle)\right|  \left| f(t)\right| ^2 \, dt.
\]
 Clearly, the lower bound in (1.3) is larger than that in (1.2).

 Because the classical Fourier transform (FT) reflects a fixed time-frequency duality, it cannot effectively capture the more flexible time-frequency structures present in nonstationary signals. This limitation motivated the introduction of the fractional Fourier transform (FrFT), which can be viewed as a natural generalization of the FT.

For signals with multiple components, such as color images and polarized signals,  classic real- or complex-valued representations are often inadequate for simultaneously describing amplitude, phase, and multidimensional correlations. To address this issue, quaternions were introduced \cite{12} and have gradually become an effective mathematical framework for multi-component signal analysis \cite{13,14}. In particular, the quaternion Fourier transform (QFT) has been extensively employed in applications such as color image processing \cite{6,7}.
Furthermore, quaternions provide a unified mathematical representation for rotations and directional information in three-dimensional space.  In physics, many fundamental quantities, such as angular momentum, electromagnetic fields, and rigid body orientation, possess vectorial or rotational structures. Quaternions offer a concise and stable algebraic framework for the computation and transformation of these quantities.

  As quaternion signal theory has developed, UPs in the quaternion framework have attracted growing interest. Beginning in 1994, several studies \cite{1,9,11} investigated uncertainty relations related to the quaternion Fourier transform (QFT). The resulting bounds in these works were obtained without involving covariance terms. Then based on the polar representation of quaternion functions \(f(x)=\rho (x)e^{u(x)\theta(x)}\),  Yang, Dang, and Qian \cite{16} stated that a stronger form of the UP  for the QFT in both directional and spatial domains, i.e.

  \begin{equation}
  \int_{\mathbb{R}^2} |x|^2 |f(x)|^2 dx \int_{\mathbb{R}^2} |w|^2 |\mathcal{F}\{f\}(w)|^2 dw \ge 1 + \text{COV}_{xw}^2,
  \end{equation}
  where
  \[
  \text{COV}_{x\omega}:=\sum_{k=1}^{2}\int_{\mathbb{R}^2}\left| x_kNSc\left[ \left( \frac{\partial}{\partial x_k}e^{u(x)\theta(x)}\right)e^{u(x)\theta(x)}\right] \right|  \rho^2(x)dx.
  \]

  They strengthened the UP using absolute covariance for quaternion-valued signals, but did not consider QFrFT. Motivated by this, we extend the covariance-based framework to QFrFT, which allows a continuous rotation in the time-frequency plane and provides greater flexibility in signal representation.

   Compared with \cite{16}, our study deals with a two-sided noncommutative structure, making the analysis more challenging. In addition, the fractional-order transform we studied introduces an order parameter, which enables an arbitrary rotation in the time-frequency plane, and we explicitly characterize the equality condition of the uncertainty principle by identifying an appropriate Gaussian-type function. Our main results are stated as follows.
  \begin{thm} (Uncertainty principle in spatial case)
   	Let \(f(\textbf{x})=\rho (\textbf{x}) e^{\textbf{u}(\textbf{x}) \theta(\textbf{x})}\). If \(f(\textbf{x}) \in L^{2}(\mathbb{R}^{2}, \mathbb{H})\), \(x_{k} f(\textbf{x})\), \(\frac{\partial}{\partial x_{k}} f(\textbf{x}) \in L^{2}(\mathbb{R}^{2}, \mathbb{H})\)  and \(\|f\|_{L^{2}}=1\), Let \(e_{1}=i,e_{2}=j\), then
   \[
   \begin{aligned}
   &\int_{\mathbb{R}^{2}}\left| \textbf{x}\right| ^{2} |f(\textbf{x})|^{2} d \textbf{x}\int_{\mathbb{R}^{2}}\left| \textbf{w}\right| ^{2} |\mathcal{F}_{\alpha_{1},\alpha_{2}}\{f\}(\textbf{w})|^{2} d \textbf{w} \\
   \geq& \frac{1}{4} \left| \sin\alpha_{1}+\sin\alpha_{2}\right| ^{2}+ COV_{\textbf{x}\textbf{w},\alpha}^{2}+\int_{\mathbb{R}^{2}} \sum_{k=1}^{2} x_{k}^{2} \rho^{2} d\textbf{x} \int_{\mathbb{R}^{2}} \sum_{k=1}^{2}\cos^{2}\alpha_{k} x_{k}^{2} \rho^{2} d\textbf{x}
 \end{aligned}
   \]
   \begin{equation}
   \begin{aligned}
   &-\!\!\int_{\mathbb{R}^{2}} \!\sum_{k=1}^{2} x_{k}^{2} \rho^{2} d\textbf{x} \!\! \int_{\mathbb{R}^{2}}\!\sum_{k=1}^{2}2\sin\alpha_{k}\cos\alpha_{k}x_{k}\rho^{2}Sc\!\left[ e_{k}\!\!\left( \frac{\partial}{\partial x_{k}} e^{\textbf{u}(\textbf{x}) \theta(\textbf{x})} \right)\!\! \left( \!e^{-\textbf{u}(\textbf{x}) \theta(\textbf{x})} \!\right)\! \right] \!\!d\textbf{x},
   \end{aligned}
   \end{equation}
   where the absolute covariance is defined by
   \[
   COV_{\textbf{x}\textbf{w},\alpha}:=\sum_{k=1}^{2}\left| \sin\alpha_{k}\right| COV_{x_{k}w_{k}} .
   \]
   \(COV_{x_{k}w_{k}}\) is given in Theorem 4.7.
   The equality in (1.5) holds if and only if
   \[
   f(\textbf{x}) = \frac{1}{\sqrt{\pi\lambda\sin \alpha}}e^{-\frac{1}{2\lambda} \frac{1}{\sin\alpha}\left| \textbf{x}\right| ^2e^{\textbf{u}(\textbf{x})\theta (\textbf{x})}},
   \]
   where \( \left( \frac{\partial}{\partial x_k} e^{\textbf{u}(\textbf{x}) \theta(\textbf{x})} \right) e^{-\textbf{u}(\textbf{x}) \theta(\textbf{x})} = \beta x_k \). Here, \(\sin \alpha >0 \), \(\beta\)  is a pure quaternion.
    \end{thm}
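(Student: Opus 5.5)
The plan is to reduce the statement to a Parseval/derivative identity for the two-sided QFrFT, then apply Cauchy–Schwarz coordinatewise with the polar form of $f$.

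\textbf{Step 1: the frequency integral in spatial terms.} The two-sided QFrFT kernel is $K_{\alpha_1}^{i}(x_1,w_1)$ on the left and $K_{\alpha_2}^{j}(x_2,w_2)$ on the right. Its chirp factors are $e^{i(\cot\alpha_1)(x_1^2+w_1^2)/2}$ and $e^{-i x_1w_1\csc\alpha_1}$, and similarly in $j$. Integrating by parts, one gets for each $k$
\[
w_k\,\mathcal{F}_{\alpha_1,\alpha_2}\{f\}(\mathbf w)=\mathcal{F}_{\alpha_1,\alpha_2}\Big\{\cos\alpha_k\,x_k f-\sin\alpha_k\,\partial_{x_k}f\,e_k\Big\}(\mathbf w),
\]
up to the side on which $e_k$ multiplies. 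The unit quaternion $e_k$ sits left for $k=1$ and right for $k=2$, and this is exactly where the two-sidedness matters. Combining this with the Plancherel identity for the QFrFT (assumed from earlier sections, since $|\cdot|$ is invariant under multiplication by unit quaternions) gives
\[
\int|\mathbf w|^2|\mathcal F_{\alpha_1,\alpha_2}\{f\}|^2d\mathbf w=\sum_k\int\big|\cos\alpha_k x_kf-\sin\alpha_k\,e_k\partial_{x_k}f\big|^2d\mathbf x .
\]

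\textbf{Step 2: expanding with the polar form.} Write $f=\rho e^{\mathbf u\theta}$, so that
\[
\partial_k f=(\partial_k\rho)e^{\mathbf u\theta}+\rho\,\partial_k e^{\mathbf u\theta},\qquad \partial_k e^{\mathbf u\theta}=\big[(\partial_k e^{\mathbf u\theta})e^{-\mathbf u\theta}\big]e^{\mathbf u\theta}.
\]
The bracket $(\partial_k e^{\mathbf u\theta})e^{-\mathbf u\theta}$ is a pure quaternion, because $e^{\mathbf u\theta}$ has unit modulus. Expanding the square modulus yields three kinds of terms:
\begin{itemize}
\item[(a)] $\cos^2\alpha_k\int x_k^2\rho^2$;
\item[(b)] $\sin^2\alpha_k\int\big[(\partial_k\rho)^2+\rho^2|(\partial_ke^{\mathbf u\theta})e^{-\mathbf u\theta}|^2\big]$, where the cross term between $\partial_k\rho$ and the pure part vanishes since its scalar part is zero;
\item[(c)] the cross term $-2\sin\alpha_k\cos\alpha_k\int x_k\rho^2 Sc[e_k(\partial_ke^{\mathbf u\theta})e^{-\mathbf u\theta}]$, together with $-\sin\alpha_k\cos\alpha_k\int x_k\partial_k(\rho^2)=\sin\alpha_k\cos\alpha_k\int\rho^2$ after integration by parts.
\end{itemize}
Multiplying by $\int|\mathbf x|^2\rho^2$, the (a) part and the first (c) part reproduce exactly the last two terms on the right of (1.5). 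It remains to bound $\int|\mathbf x|^2\rho^2\cdot\sum_k\sin^2\alpha_k\int[(\partial_k\rho)^2+\rho^2|q_k|^2]$, where $q_k=(\partial_ke^{\mathbf u\theta})e^{-\mathbf u\theta}$. One must also track the constant $\sin\alpha_k\cos\alpha_k$ term so that it is absorbed or consistent with the stated bound; possibly it is already accounted for in the paper's normalization of the kernel.

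\textbf{Step 3: Cauchy–Schwarz producing the covariance.} Following \cite{16} and \cite{4}, write $\sum_k\int x_k^2\rho^2\le\int|\mathbf x|^2\rho^2$. Then
\[
\Big(\sum_k\int x_k^2\rho^2\Big)\Big(\sum_k\sin^2\alpha_k\int\big[(\partial_k\rho)^2+\rho^2|q_k|^2\big]\Big)\ \ge\ \Big(\sum_k|\sin\alpha_k|\int|x_k|\sqrt{(\partial_k\rho)^2+\rho^2|q_k|^2}\,\rho\Big)^2 .
\]
Using $\sqrt{a^2+b^2}\ge$ the combination argument of Dang--Deng--Qian, that is $(A+B)^2\ge A^2+B^2$ with nonnegative pieces, this splits into $\big(\sum_k|\sin\alpha_k|\int|x_k\rho\,\partial_k\rho|\big)^2+COV^2_{\mathbf x\mathbf w,\alpha}$, where $COV_{x_kw_k}=\int|x_k||q_k|\rho^2$ as in Theorem 4.7. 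Since $\int x_k\rho\,\partial_k\rho=-\tfrac12$, the first square is at least $\tfrac14|\sin\alpha_1+\sin\alpha_2|^2$.

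\textbf{Step 4: the equality case.} Equality forces three things:
\begin{itemize}
\item proportionality in each Cauchy–Schwarz step, $\partial_k\rho=-c\,x_k\rho$ with the same constant $c$ for both $k$, giving a radial Gaussian $\rho$;
\item $|q_k|\propto|x_k|$, i.e.\ $q_k=\beta x_k$;
\item the $(A+B)^2\ge A^2+B^2$ step being tight, together with the constraint linking the two orders.
\end{itemize}
Normalizing $\|f\|=1$ gives the constant $1/\sqrt{\pi\lambda\sin\alpha}$. Conversely, the displayed Gaussian should be checked directly to give equality.

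\textbf{Main obstacle.} I expect the main difficulty to be Step 1 and the start of Step 2. For the two-sided noncommutative kernel, one must verify that the derivative identity places $e_k$ so that the cross term is exactly $Sc[e_k q_k]$, and that all mixed terms between the $i$-chirp and the $j$-chirp cancel under Plancherel. I would first handle this by splitting $f=f_1+f_2 j$, or by using the unit-modulus invariance one variable at a time. A secondary delicacy is making the equality analysis rigorous when the signs of $\sin\alpha_k$ differ, which is presumably why the statement assumes $\sin\alpha>0$.
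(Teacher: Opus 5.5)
The obstacle you flag at the end is real and cannot be removed. Step 1 is wrong for $k=2$, and so is the statement as written. Since $K_{\alpha_2}$ acts from the right, Lemma 3.4 gives $w_2\,\mathcal{F}_{\alpha_1,\alpha_2}\{f\}=\mathcal{F}_{\alpha_1,\alpha_2}\{\cos\alpha_2\,x_2f-\sin\alpha_2\,(\partial_{x_2}f)\,j\}$, with $j$ to the right of $\partial_{x_2}f$. Unit-modulus invariance only yields $|Bj|=|jB|$, not $|A-Bj|=|A-jB|$; for $A=k$, $B=i$ these are $0$ and $2$.

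Expanding $|\cos\alpha_2x_2f-\sin\alpha_2(\partial_{x_2}f)j|^2$, the cross term is $-2\sin\alpha_2\cos\alpha_2\,x_2\,Sc[(\partial_{x_2}f)j\overline{f}]=-2\sin\alpha_2\cos\alpha_2\,x_2\rho^2\,Sc[j\,\tilde q_2]$. Here $\tilde q_2=e^{-\mathbf{u}\theta}\partial_{x_2}e^{\mathbf{u}\theta}=e^{-\mathbf{u}\theta}q_2e^{\mathbf{u}\theta}$ is the \emph{right} logarithmic derivative. It has the same modulus as $q_2=(\partial_{x_2}e^{\mathbf{u}\theta})e^{-\mathbf{u}\theta}$, but in general a different $Sc[j\,\cdot\,]$ once $\mathbf{u}$ is non-constant.

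Here is a counterexample. Take $\alpha_1=\alpha_2=\pi/4$, $\rho=\pi^{-1/2}e^{-|\mathbf{x}|^2/2}$ and $f=\rho\,e^{ix_1^2/2}e^{-jx_2^2/2}$. Then $q_1=ix_1$, $\tilde q_2=-jx_2$ and $Sc[jq_2]=x_2\cos(x_1^2)$. The left side of (1.5) equals $3/2$, and both Cauchy--Schwarz steps are tight. The right side of (1.5) equals $3/2+\int_{\mathbb{R}^2}x_2^2\rho^2(1-\cos x_1^2)\,d\mathbf{x}>3/2$, numerically about $1.61$.

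The paper's proof has the same hole: Theorem 4.5 is worked out for $k=1$, and $k=2$ is declared ``similar''. So following the paper will not help. Carrying out your splitting (for a right $j$-kernel, $f=f_a+i f_b$ with $f_a,f_b$ valued in $\mathrm{span}\{1,j\}$) only confirms the $\tilde q_2$ term. What the argument actually proves is (1.5) with $Sc[e_2q_2]$ replaced by $Sc[j\,\tilde q_2]$; the covariance term is unchanged because $|\tilde q_2|=|q_2|$.

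Apart from this, your outline follows the paper's route: derivative formula and Parseval, polar expansion via Lemmas 4.1--4.3, Cauchy--Schwarz, and Gaussian extremals. There are three smaller defects.

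First, the extra term in Step 2(c) does not exist. $Sc[e_k\,\rho\,\partial_{x_k}\rho]=0$ because $e_k$ is pure and $\rho\,\partial_{x_k}\rho$ is real, so nothing needs absorbing.

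Second, in Step 3, $(A+B)^2\ge A^2+B^2$ points the wrong way, because $\sqrt{a^2+b^2}\le a+b$. The valid tool is $\int\sqrt{a^2+b^2}\,d\mu\ge\big((\int a\,d\mu)^2+(\int b\,d\mu)^2\big)^{1/2}$. More simply, do what the paper does: with $P=\int|\mathbf{x}|^2\rho^2$, split $P\sum_k\sin^2\alpha_k\int[(\partial_{x_k}\rho)^2+\rho^2|q_k|^2]$ into two products and apply Cauchy--Schwarz (continuous, then discrete) to each.

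Third, in Step 4, equality in (1.5) forces $\sin\alpha_k\,\partial_{x_k}\rho=c\,x_k\rho$ and $|\sin\alpha_k|\,|q_k|=\nu|x_k|$, with $c,\nu$ independent of $k$. It does not force $\partial_{x_k}\rho=-c\,x_k\rho$ and $q_k=\beta x_k$. Hence $\rho$ is radial only when $\sin\alpha_1=\sin\alpha_2$. For $0<\sin\alpha_1\neq\sin\alpha_2$, anisotropic Gaussians attain equality: take $\phi=\sum_k x_k^2/(2\sin\alpha_k)$, $\rho\propto e^{-\phi}$ and $e^{\mathbf{u}\theta}=e^{\beta\phi}$ with $\beta$ a fixed unit pure quaternion. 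So the single-$\alpha$ characterization in the statement can only describe the case $\sin\alpha_1=\sin\alpha_2$.
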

In Theorem 1.1, we established the UP between the time domain and the QFrFT domain. Furthermore, the analysis is extended from the "time domain-transform domain"setting to the "transform domain-transform domain" setting, leading to a relation between QFrFT domains of different orders. This extension reveals the intrinsic connection between the geometric constraints associated with different fractional orders and the covariance structure, thereby rendering the uncertainty principle in the quaternion fractional framework more symmetric and complete.

\begin{thm}
		Let \(f(\textbf{x})=\rho (\textbf{x}) e^{\textbf{u}(\textbf{x}) \theta(\textbf{x})}\). If \(f(\textbf{x}) \in L^{2}(\mathbb{R}^{2}, \mathbb{H})\), \(x_{k} f(\textbf{x})\), \(\frac{\partial}{\partial x_{k}} f(\textbf{x}) \in L^{2}(\mathbb{R}^{2}, \mathbb{H})\)  and \(\|f\|_{L^{2}}=1\), then
	\[
	\begin{aligned}
	&\int_{\mathbb{R}^{2}}\left| \textbf{w}\right| ^{2} |\mathcal{F}_{\alpha_{1},\alpha_{2}}\{f\}(\textbf{w})|^{2} d \textbf{w}\int_{\mathbb{R}^{2}}\left| \textbf{y}\right| ^{2} |\mathcal{F}_{\beta_{1},\beta_{2}}\{f\}(\textbf{y})|^{2} d \textbf{y}\\
	\geq& \left| \int_{\mathbb{R}^{2}}  \sum_{k=1}^{2}sin \alpha_ksin \beta_k  \left( \frac{\partial}{\partial x_{k}} \rho(\textbf{x})\right)  ^{2}d\textbf{x}\right| ^{2}+\frac{1}{4}\left| \sum_{k=1}^{2}sin \alpha_kcos \beta_k\right| ^{2}\\
	&+\left| \int_{\mathbb{R}^{2}}  \sum_{k=1}^{2}sin \alpha_ksin \beta_k\rho(\textbf{x})  \left( \frac{\partial}{\partial x_{k}} \rho(\textbf{x})\right)  \left|  \left( \frac {\partial }{\partial x_{k}}e^{\textbf{u}(\textbf{x})\theta (\textbf{x})}\right) e^{-\textbf{u}(\textbf{x})\theta (\textbf{x})}\right|d\textbf{x}\right| ^{2}\\
	&+\frac{1}{4}\left| \sum_{k=1}^{2}cos \alpha_ksin \beta_k\right| ^{2}+\left| \int_{\mathbb{R}^{2}}  \sum_{k=1}^{2}cos \alpha_kcos \beta_k   x_{k}^{2}\rho^{2}(\textbf{x})d\textbf{x}\right| ^{2}\\
	&+\left| \int_{\mathbb{R}^{2}}  \sum_{k=1}^{2}cos \alpha_ksin \beta_kx_{k}\rho^{2}(\textbf{x}) \left|  \left( \frac {\partial }{\partial x_{k}}e^{\textbf{u}(\textbf{x})\theta (\textbf{x})}\right) e^{-\textbf{u}(\textbf{x})\theta (\textbf{x})}\right|d\textbf{x}\right| ^{2}\\
	&+\left| \int_{\mathbb{R}^{2}}  \sum_{k=1}^{2}sin \alpha_ksin \beta_k\rho(\textbf{x}) \left( \frac{\partial}{\partial x_{k}} \rho(\textbf{x})\right) \left|  \left( \frac {\partial }{\partial x_{k}}e^{\textbf{u}(\textbf{x})\theta (\textbf{x})}\right) e^{-\textbf{u}(\textbf{x})\theta (\textbf{x})}\right|d\textbf{x}\right| ^{2}\\
	&+ \left| \int_{\mathbb{R}^{2}}  \sum_{k=1}^{2}sin \alpha_kcos \beta_kx_{k}\rho^{2}(\textbf{x}) \left|  \left( \frac {\partial }{\partial x_{k}}e^{\textbf{u}(\textbf{x})\theta (\textbf{x})}\right) e^{-\textbf{u}(\textbf{x})\theta (\textbf{x})}\right|d\textbf{x}\right| ^{2}\\
	&+\left| \int_{\mathbb{R}^{2}}  \sum_{k=1}^{2}sin \alpha_ksin \beta_k\rho^{2}(\textbf{x}) \left|  \left( \frac {\partial }{\partial x_{k}}e^{\textbf{u}(\textbf{x})\theta (\textbf{x})}\right) e^{-\textbf{u}(\textbf{x})\theta (\textbf{x})}\right|^{2}d\textbf{x}\right| ^{2}\\
	&-I_{1}I_{8}-I_{2}I_{8}-I_{3}I_{8}-I_{4}I_{5}- I_{4}I_{6}-I_{4}I_{7}-I_{4}I_{8},
	\end{aligned}
	\]
	where
\[
\begin{aligned}
&I_{1}=\int_{\mathbb{R}^{2}} \sum_{k=1}^{2}sin^{2} \alpha_k  \left( \frac{\partial}{\partial x_{k}} \rho(\textbf{x})\right)  ^{2} d\textbf{x},
I_{2}=\int_{\mathbb{R}^{2}} \sum_{k=1}^{2}\cos^{2}\alpha_{k} x_{k}^{2} \rho^{2} d\textbf{x},
	\end{aligned}
\]
\[
\begin{aligned}
&I_{3}=\int_{\mathbb{R}^{2}} \sum_{k=1}^{2}sin^{2} \alpha_k \rho^{2}\left| \left( \frac{\partial}{\partial x_{k}} e^{\textbf{u}(\textbf{x}) \theta(\textbf{x})} \right) \left( e^{-\textbf{u}(\textbf{x}) \theta(\textbf{x})} \right) \right| ^{2}d\textbf{x},\\
&I_{4}=\int_{\mathbb{R}^{2}}\sum_{k=1}^{2}2\sin\alpha_{k}\cos\alpha_{k}x_{k}\rho^{2}Sc \left[ e_{k}\left( \frac{\partial}{\partial x_{k}} e^{\textbf{u}(\textbf{x}) \theta(\textbf{x})} \right) \left( e^{-\textbf{u}(\textbf{x}) \theta(\textbf{x})} \right) \right] d\textbf{x},\\
&I_{5}=\int_{\mathbb{R}^{2}} \sum_{k=1}^{2}sin^{2} \beta_k  \left( \frac{\partial}{\partial x_{k}} \rho(\textbf{x})\right)  ^{2} d\textbf{x},
I_{6}=\int_{\mathbb{R}^{2}} \sum_{k=1}^{2}\cos^{2}\beta_{k} x_{k}^{2} \rho^{2} d\textbf{x}, \\
&I_{7}=\int_{\mathbb{R}^{2}} \sum_{k=1}^{2}sin^{2} \beta_k \rho^{2}\left| \left( \frac{\partial}{\partial x_{k}} e^{\textbf{u}(\textbf{x}) \theta(\textbf{x})} \right) \left( e^{-\textbf{u}(\textbf{x}) \theta(\textbf{x})} \right) \right| ^{2}d\textbf{x},\\
&I_{8}=\int_{\mathbb{R}^{2}}\sum_{k=1}^{2}2\sin\beta_{k}\cos\beta_{k}x_{k}\rho^{2}Sc \left[ e_{k}\left( \frac{\partial}{\partial x_{k}} e^{\textbf{u}(\textbf{x}) \theta(\textbf{x})} \right) \left( e^{-\textbf{u}(\textbf{x}) \theta(\textbf{x})} \right) \right] d\textbf{x}.
\end{aligned}
\]
\end{thm}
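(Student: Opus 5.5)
The plan is to prove Theorem 1.2 by reducing both fractional-domain second moments to spatial-domain quadratic forms in $\rho$ and the phase. Then the product is bounded from below by a Cauchy--Schwarz type inequality applied componentwise.

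\emph{Step 1: each moment as a sum of terms.} Using the Parseval identity for the QFrFT and the derivative/multiplication rules for $\mathcal{F}_{\alpha_1,\alpha_2}$, I will write
\[
\int_{\mathbb{R}^2}|\textbf{w}|^2|\mathcal{F}_{\alpha_1,\alpha_2}\{f\}(\textbf{w})|^2d\textbf{w}=\sum_{k=1}^2\int_{\mathbb{R}^2}\left|\cos\alpha_k\, x_kf(\textbf{x})-\sin\alpha_k\,\tfrac{\partial}{\partial x_k}f(\textbf{x})\,e_k\right|^2d\textbf{x}.
\]
This is the same identity that underlies Theorem 1.1, with the two-sided kernel placing $e_1=i$ on one side and $e_2=j$ on the other. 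Next I substitute $f=\rho e^{\textbf{u}\theta}$ and $\frac{\partial f}{\partial x_k}=(\partial_k\rho)e^{\textbf{u}\theta}+\rho\,(\partial_k e^{\textbf{u}\theta})$. The quantity $(\partial_k e^{\textbf{u}\theta})e^{-\textbf{u}\theta}$ is a pure quaternion, because $e^{\textbf{u}\theta}$ has unit modulus. Expanding the square therefore gives exactly $I_1+I_2+I_3+I_4$, since cross terms between real and pure parts vanish under $Sc$. In the same way the second moment equals $I_5+I_6+I_7+I_8$.

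\emph{Step 2: form the product.} I write $A=I_1+I_2+I_3$ and $B=I_5+I_6+I_7$, so the product is $(A+I_4)(B+I_8)$. I expand this product and keep $AB$. The cross terms $-I_1I_8-\dots-I_4I_8$ in the statement suggest that, after rearranging, the claimed bound takes the form $AB\ge(\text{sum of squares})$. The terms involving $I_4,I_8$ are then moved across, in the same way as in the proof of Theorem 1.1.

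\emph{Step 3: bound $AB$.} This is the main step. For each pair of terms, such as $I_1I_5$, $I_1I_6$, $I_2I_5$, $I_2I_6$ and those involving $I_3,I_7$, I apply Cauchy--Schwarz in the form $\int a^2\int b^2\ge(\int ab)^2$. The functions are chosen as follows:
\begin{itemize}
\item For $I_1I_5$, take $a=\sin\alpha_k\partial_k\rho$ and $b=\sin\beta_k\partial_k\rho$. This gives the first square.
\item For $I_1I_6$, take $a=\sin\alpha_k\partial_k\rho$ and $b=\cos\beta_k x_k\rho$. Integrating by parts, $\int x_k\rho\,\partial_k\rho=-\tfrac12\int\rho^2$, which produces $\frac14|\sum\sin\alpha_k\cos\beta_k|^2$.
\item The pairs $I_2I_5$ and $I_2I_6$ are handled in the same way.
\item Pairs involving the phase modulus $|(\partial_ke^{\textbf{u}\theta})e^{-\textbf{u}\theta}|$ give the absolute-covariance-type squares.
\end{itemize}
Each lower bound is kept as the modulus of an integral, which matches the form in the statement.

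\emph{Expected obstacle.} The hard part is the bookkeeping. The product $AB$ contains nine products $I_iI_{j}$ with $i\in\{1,2,3\}$ and $j\in\{5,6,7\}$, and each one must be matched to exactly one listed square. The two sums over $k$ also need care: I would first apply Cauchy--Schwarz to the combined sums over $k$ and $\textbf{x}$, treating $\mathbb{R}^2\times\{1,2\}$ as one measure space, rather than coordinate by coordinate. This ensures that $\sum_k$ stays inside the modulus, as in the statement.

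I would also check that the noncommutativity introduced by $e_k$ does not spoil the cross terms in Step 1. For $k=1$ the factor $e_1$ multiplies on the left, and for $k=2$ it multiplies on the right. In both cases $|q\,e_k|=|q|$, so the moduli are unaffected, and the $Sc$ terms remain well defined.
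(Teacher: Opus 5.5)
Your route is the paper's own. You decompose each second moment by the identity of Theorem 4.5, expand the product into the sixteen products $I_iI_j$, bound the nine products with $i\in\{1,2,3\}$, $j\in\{5,6,7\}$ by Cauchy--Schwarz on $\mathbb{R}^2\times\{1,2\}$, and carry the remaining seven. Those nine bounds are fine. The gap is the sign bookkeeping in Steps 1--2, and it cannot be closed.

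First, expanding your Step 1 integrand gives a cross term $-2\sin\alpha_k\cos\alpha_k\int x_k\rho^2 Sc[\cdots]\,d\textbf{x}$. So the moment is $I_1+I_2+I_3-I_4$, as in Theorem 4.5, not $I_1+I_2+I_3+I_4$.

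Second, no rearrangement turns $(A+I_4)(B+I_8)=AB+AI_8+I_4B+I_4I_8$ together with $AB\ge S$ ($S$ the nine squares) into the statement's $-AI_8-I_4B-I_4I_8$. The discrepancy $2(AI_8+I_4B+I_4I_8)$ has no definite sign. With the correct signs you get $(A-I_4)(B-I_8)\ge S-I_1I_8-I_2I_8-I_3I_8-I_4I_5-I_4I_6-I_4I_7+I_4I_8$. This differs from the statement by $2I_4I_8$, and $I_4I_8$ can be negative. The paper's proof has the same slip: it writes the product as $(I_1+I_2+I_3+I_4)(I_5+I_6+I_7+I_8)$ and then attaches the seven leftover products with minus signs.

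In fact the inequality as stated is false. Let $f(\textbf{x})=\pi^{-1/2}e^{-(x_1^2+x_2^2)/2}e^{i\gamma x_1^2/2}$ with $\gamma\neq 0$, so that $(\partial_{x_1}e^{\textbf{u}\theta})e^{-\textbf{u}\theta}=i\gamma x_1$ and the $x_2$-term is $0$. Take $\alpha_1=\alpha_2=\pi/6$ and $\beta_1=\beta_2=5\pi/6$. Then $I_1=I_5=\frac14$, $I_2=I_6=\frac34$, $I_3=I_7=\frac{\gamma^2}{8}$ and $I_4=-I_8=-\frac{\sqrt3}{4}\gamma$.

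The left side is $(1+\frac{\gamma^2}{8})^2-\frac{3\gamma^2}{16}=1+\frac{\gamma^2}{16}+\frac{\gamma^4}{64}$. You can check this directly with the one-dimensional FrFT, since $f$ is separable and $\mathbb{C}_i$-valued.

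On the right side:
\begin{itemize}
\item The four squares containing $|(\partial_{x_k}e^{\textbf{u}\theta})e^{-\textbf{u}\theta}|$ to the first power vanish, because $\int x_1|x_1|\rho^2\,d\textbf{x}=0$.
\item The other five squares sum to $1+\frac{\gamma^4}{64}$.
\item Since $I_1+I_2+I_3=I_5+I_6+I_7$ and $I_4=-I_8$, the seven leftover products reduce to $-I_4I_8=\frac{3\gamma^2}{16}$.
\end{itemize}
So the right side exceeds the left by $\frac{\gamma^2}{8}$. The statement your argument can actually prove has $+I_4I_8$ in place of $-I_4I_8$.

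One further correction concerns your noncommutativity check. The fact that $|q\,e_k|=|q|$ handles the squared terms but not the cross term. For $k=2$, where $j$ multiplies on the right, expanding $|\cos\alpha_2 x_2 f-\sin\alpha_2(\partial_{x_2}f)j|^2$ gives the cross term $-2\sin\alpha_2\cos\alpha_2\int x_2\rho^2 Sc[j\,e^{-\textbf{u}\theta}\partial_{x_2}e^{\textbf{u}\theta}]\,d\textbf{x}$. Here the logarithmic derivative sits on the other side, so in general it differs from the $k=2$ part of $I_4$. The two agree, for example, when $\textbf{u}$ is constant.
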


  The article is organized as follows. In Section 2, we present the quaternion algebra and the polar form of quaternion-valued signals.
  In Section 3, we recall the definition and properties of QFrFT. In Section 4, the proof of the main theorem has been fully provided. In Section 5, we provide an example to verify the obtained results.

	\section{Preliminaries}

	The quaternion algebra \(\mathbb{H}\) generalizes complex numbers to a four-dimensional non-commutative system \cite{10}, which was introduced by  Hamilton. Any real quaternion $q \in \mathbb{H}$ may be expressed as
	\[
	q = q_0 + \underline{q} = q_0 + iq_1 + jq_2 + kq_3, q_m \in \mathbb{R}, m=0,1,2,3,
	\]
	where $i, j, k$ have Hamilton's multiplication rules
	\[
	i^2 = j^2 = k^2 = -1, \quad ij = -ji = k,
	\]
	\[
	jk = -kj = i, \quad ki = -ik = j.
	\]

	The scalar component of $q$ is denoted by  $q_0$, written as  $\text{Sc}[q] = q_0$.  The non-scalar component, also called the pure quaternion, is represented by  $\underline{q}$, denoted as $\text{NSc}[q] = \underline{q}$.
	
The multiplication of  $p = p_0 + \underline{p}$ and $q = q_0 + \underline{q}$ can be written as
	\[
	pq = p_0q_0 + \underline{p} \cdot \underline{q} + p_0\underline{q} + q_0\underline{p} + \underline{p} \times \underline{q},
	\]
	where
	\[
	\underline{p} \cdot \underline{q} = -\left(p_1q_1 + p_2q_2 + p_3q_3\right)
	\]
	and
	\[
	\underline{p} \times \underline{q} = i(p_3q_2 - p_2q_3) + j(p_1q_3 - p_3q_1) + k(p_2q_1 - p_1q_2).
	\]

	For a quaternion \(q\in\mathbb{H}\), its conjugate is defined as
 $\overline{q} = q_0 - iq_1 - jq_2 - kq_3$.
 The quaternion conjugation is a linear antinvolution satisfying
	\[
	\overline{\overline{q}} = q, \quad \overline{p+q} = \overline{p} + \overline{q}, \quad \overline{pq} = \overline{q} \,\overline{p}.
	\]
	
	Clearly, $q\overline{q} = \overline{q}q = q_0^2 + q_1^2 + q_2^2 + q_3^2$. Thus, the modulus of a quaternion is given by
	\[
	|q| = \sqrt{q\overline{q}} = \sqrt{q_0^2 + q_1^2 + q_2^2 + q_3^2}.
	\]
	
	For $0 \neq q \in \mathbb{H}$, the inverse quaternion can be verified as
	\begin{equation}
	q^{-1} = \frac{\overline{q}}{|q|^2}.
	\end{equation}
	
Here, we consider quaternion-valued signals $f: \mathbb{R}^2 \to \mathbb{H}$ with
	\[
	f(\textbf{x}) = f_0(\textbf{x}) + if_1(\textbf{x}) + jf_2(\textbf{x}) + kf_3(\textbf{x}),
	\]
	where $\textbf{x} = (x_1,x_2) \in \mathbb{R}^2$ and $f_m$ ($m=0,1,2,3$) are real-valued functions.
	
	
	The polar form of quaternion-valued signals is given by
	\[
	\begin{aligned}
	f(\textbf{x}) &= f_0(\textbf{x}) + if_1(\textbf{x}) + jf_2(\textbf{x}) + kf_3(\textbf{x})
	= |f(\textbf{x})|e^{\textbf{u}(\textbf{x})\theta(\textbf{x})}
	= \rho(\textbf{x})e^{\textbf{u}(\textbf{x})\theta(\textbf{x})},
	\end{aligned}
	\]
	where $e^{\textbf{u}(\textbf{x})\theta(\textbf{x})}$ follows Euler's formula $e^{\textbf{u}(\textbf{x})\theta(\textbf{x})} = \cos\theta(\textbf{x}) + \textbf{u}(\textbf{x})\sin\theta(\textbf{x})$, and
	\[
	\rho(\textbf{x}) := \sqrt{f_0^2(\textbf{x}) + f_1^2(\textbf{x}) + f_2^2(\textbf{x}) + f_3^2(\textbf{x})},
	\]
	\[
	\textbf{u}(\textbf{x}) := \frac{if_1(\textbf{x}) + jf_2(\textbf{x}) + kf_3(\textbf{x})}{\sqrt{f_1^2(\textbf{x}) + f_2^2(\textbf{x}) + f_3^2(\textbf{x})}}.
	\]
Here $\textbf{u}(\textbf{x})$
belongs to the unit sphere $S^2 := \{q \in \mathbb{H} \mid |q|^2 = 1\}$. The quaternionic phase
	\[
	\theta(\textbf{x}) := \arctan\frac{\sqrt{f_1^2(\textbf{x}) + f_2^2(\textbf{x}) + f_3^2(\textbf{x})}}{f_0(\textbf{x})} \in [0, \pi].
	\]

The inner product of $f(\textbf{x}), g(\textbf{x}) \in L^2(\mathbb{R}^2, \mathbb{H})$ be defined by
	\[
	\langle f(\textbf{x}), g(\textbf{x}) \rangle := \int_{\mathbb{R}^2} f(\textbf{x})\overline{g(\textbf{x})} d\textbf{x}.
	\]
	Clearly, $\|f\|_{L^2}^2 = \langle f, f \rangle$.

Here, we present three spaces used in this paper.
	\[
	L^{1}\left(\mathbb{R}^{2} ; \mathbb{H}\right)=\left\{f: \mathbb{R}^{2} \to \mathbb{H} \,\middle|\, \int_{\mathbb{R}^{2}} |f(\textbf{x})| \, d\textbf{x} < \infty\right\},
	\]
	\[
	L^{2}\left(\mathbb{R}^{2} ; \mathbb{H}\right)=\left\{f: \mathbb{R}^{2} \to \mathbb{H} \,\middle|\, \int_{\mathbb{R}^{2}} |f(\textbf{x})|^{2} \, d\textbf{x} < \infty\right\},
	\]
	\[
	S\left(\mathbb{R}^{2} ; \mathbb{H}\right)=\left\{f \in C^{\infty}\left(\mathbb{R}^{2} ; \mathbb{H}\right) \,\middle|\, \sup _{\textbf{x} \in \mathbb{R}^{2}}\left(1+|\textbf{x}|^{k}\right)\left|\partial^{\alpha} f(\textbf{x})\right| < \infty\right\},
	\]
	where \(C^{\infty}(\mathbb{R}^{2} ; \mathbb{H})\) is the set of all infinitely differentiable functions from \(\mathbb{R}^{2}\) to \(\mathbb{H}\), and \(\alpha = (\alpha_{1}, \alpha_{2})\) (with \(\alpha_1, \alpha_2 \in \mathbb{Z}_{+}\)) and \(k \in \mathbb{Z}_{+}\).

	\section{Some properties of the QFrFT}
	In this section we recall the definition and properties of QFrFT. We first give the definition of the QFrFT and its inverse transformation. Then we study derivative formula, Plancherel identity, Parseval identity, etc.
		\begin{defn} Suppose the function \(f \in L^1(\mathbb{R}^2; \mathbb{H})\).  $p = (p_1, p_2)$-order two-sided QFrFT is defined by:
		\[
		\mathcal{F}_{\alpha_1,\alpha_2}\{f\}(\textbf{w}) = \int_{\mathbb{R}^2} K_{\alpha_1}(x_1,w_1)f(\textbf{x})K_{\alpha_2}(x_2,w_2) \, d\textbf{x},
		\]
		where the kernel is
		\[
		K_{\alpha_1}(x_1,w_1) = C_{\alpha_1} e^{i  \frac{x_1^2 + w_1^2}{2} \cot\alpha_1 - ix_1w_1 \csc\alpha_1}, \quad C_{\alpha_1} = \sqrt{\frac{1 - i \cot\alpha_1}{2\pi}},
		\]
		\[
		K_{\alpha_2}(x_2,w_2) = C_{\alpha_2} e^{j  \frac{x_2^2 + w_2^2}{2} \cot\alpha_2 - jx_2w_2 \csc\alpha_2 }, \quad C_{\alpha_2} = \sqrt{\frac{1 - j \cot\alpha_2}{2\pi}}
		\]
		and $\alpha_k \neq n\pi$, $p_k = \frac{2\alpha_k}{\pi}$ for $k = 1, 2$ and $n\in \mathbb{Z}$.
	\end{defn}
	As a special case, if $\alpha_1 = \alpha_2 = \dfrac{\pi}{2}+2n\pi$ for $n\in \mathbb{Z}$, then the two-sided QFrFT leads to the two-sided QFT.
	\begin{defn} Assume that \(f \in L^{1}(\mathbb{R}^{2} ; \mathbb{H})\). The inverse transformation  is defined as follows:
		\[
		\mathcal{H}_{\alpha_{1},\alpha_{2}}\{f\}(\textbf{w})=\int_{\mathbb{R}^{2}} K_{-\alpha_{1}}(x_{1},w_{1}) f(\textbf{x}) K_{-\alpha_{2}}(x_{2},w_{2}) \, d\textbf{x}, \tag{3.2}
		\]
		where
		\[
		K_{-\alpha_{1}}(x_{1},w_{1}) = C_{-\alpha_{1}} e^{-i \frac{x_{1}^{2}+w_{1}^{2}}{2} \cot\alpha_{1} + i x_{1} w_{1} \csc\alpha_{1}}, \quad C_{-\alpha_{1}} = \sqrt{\frac{1+i \cot\alpha_{1}}{2\pi}},
		\]
		\[
		K_{-\alpha_{2}}(x_{2},w_{2}) = C_{-\alpha_{2}} e^{-j \frac{x_{2}^{2}+w_{2}^{2}}{2} \cot\alpha_{2} + j x_{2} w_{2} \csc\alpha_{2}}, \quad C_{-\alpha_{2}} = \sqrt{\frac{1+j \cot\alpha_{2}}{2\pi}}
		\]
		and \(\alpha_{k}\ (k=1,2)\) are as mentioned above.
	\end{defn}
	\begin{lem}(\!\!\cite{10})(Plancherel Theorem)
		Suppose that \(f_{1}, f_{2} \in L^{2}(\mathbb{R}^{2} ; \mathbb{H})\), then we have
		\[
		Sc \langle\mathcal{F}_{\alpha_{1}, \alpha_{2}}\left\{f_{1}\right\}(\textbf{w}), \mathcal{F}_{\alpha_{1}, \alpha_{2}}\left\{f_{2}\right\}(\textbf{w})\rangle = 	Sc \langle f_{1}, f_{2} \rangle.
		\]
		In particular, with \(f_1=f_2=f\), we get the Parseval theorem, i.e.
		\[
		\| f\| ^{2}=\| \mathcal{F}_{\alpha_{1},\alpha_{2}}\{f\}(\textbf{w})\| ^{2}.
		\]
	\end{lem}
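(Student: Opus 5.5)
The plan is to reduce the Plancherel statement for the two-sided QFrFT to the known Plancherel theorem for the two-sided quaternion Fourier transform, by factoring each fractional kernel into chirp multiplications and a dilated Fourier kernel. For $K_{\alpha_1}$ we write
\[
K_{\alpha_1}(x_1,w_1)=C_{\alpha_1}\, e^{i\frac{w_1^2}{2}\cot\alpha_1}\, e^{-i x_1 w_1\csc\alpha_1}\, e^{i\frac{x_1^2}{2}\cot\alpha_1},
\]
which is legitimate because all factors lie in the commutative subalgebra $\mathbb{C}_i=\mathrm{span}\{1,i\}$. The analogous factorisation holds for $K_{\alpha_2}$ in $\mathbb{C}_j$.

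Set $g(\textbf{x})=e^{i\frac{x_1^2}{2}\cot\alpha_1} f(\textbf{x})\, e^{j\frac{x_2^2}{2}\cot\alpha_2}$. Then
\[
\mathcal{F}_{\alpha_1,\alpha_2}\{f\}(\textbf{w})=C_{\alpha_1}e^{i\frac{w_1^2}{2}\cot\alpha_1}\,\mathcal{F}_q\{g\}(w_1\csc\alpha_1,w_2\csc\alpha_2)\,2\pi\, e^{j\frac{w_2^2}{2}\cot\alpha_2}C_{\alpha_2},
\]
where $\mathcal{F}_q$ is the two-sided QFT normalized by $\frac{1}{2\pi}$.

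The key observations are the following.

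First, left multiplication by a unit element of $\mathbb{C}_i$ and right multiplication by a unit element of $\mathbb{C}_j$ are isometries of $\mathbb{H}$. Moreover, $\mathrm{Sc}[a p \overline{q}\,\overline{a}]=\mathrm{Sc}[p\overline{q}]$ for unit $a$, and likewise for right factors $b$, because $\mathrm{Sc}$ is invariant under cyclic permutation: $\mathrm{Sc}[pq]=\mathrm{Sc}[qp]$.

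Second, $C_{\alpha_1}$ is not unit. Write $C_{\alpha_1}=|C_{\alpha_1}|\,e^{i\phi_1}$ with $|C_{\alpha_1}|^2=\frac{|\csc\alpha_1|}{2\pi}$, using $|1-i\cot\alpha_1|=|\csc\alpha_1|$.

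Third, the change of variables $\xi_k=w_k\csc\alpha_k$ contributes the Jacobian $|\sin\alpha_1\sin\alpha_2|$. This exactly cancels $(2\pi)^2|C_{\alpha_1}|^2|C_{\alpha_2}|^2$.

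Combining these, $\mathrm{Sc}\langle \mathcal{F}_{\alpha}\{f_1\},\mathcal{F}_{\alpha}\{f_2\}\rangle=\mathrm{Sc}\langle \mathcal{F}_q\{g_1\},\mathcal{F}_q\{g_2\}\rangle$. By the QFT Plancherel identity in scalar form, this equals $\mathrm{Sc}\langle g_1,g_2\rangle$, and the chirps cancel by the same cyclic-scalar argument to give $\mathrm{Sc}\langle f_1,f_2\rangle$. Setting $f_1=f_2$ makes the inner product real, which gives Parseval.

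The main obstacle is the noncommutativity in the scalar part. For example, in $\mathrm{Sc}[a\,P\,b\,\overline{b}\,\overline{Q}\,\overline{a}]$ the right factors $b\overline{b}=|b|^2$ collapse only because they sit adjacent. That happens because $\overline{\mathcal{F}\{f_2\}}$ reverses the order. Each cancellation must therefore be checked in the correct order, and the left factors are removed via cyclicity of $\mathrm{Sc}$. This is also why only the scalar part, not the full quaternion inner product, is preserved.

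Finally, the identity is proved first for $f\in L^1\cap L^2$, or for Schwartz $f$. It then extends to $L^2$ by density, with $\mathcal{F}_{\alpha_1,\alpha_2}$ defined on $L^2$ as the continuous extension.
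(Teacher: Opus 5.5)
Your proof is correct. The paper does not prove this lemma; it quotes it from \cite{10}, so there is no internal argument to match against.

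Judged on its own, your reduction to the two-sided QFT works. The kernel factorisations are legitimate inside the commutative subalgebras $\mathbb{C}_i$ and $\mathbb{C}_j$. The ordering $aPb\,\overline{b}\,\overline{Q}\,\overline{a}$ is right. The Jacobian $|\sin\alpha_1\sin\alpha_2|$ does cancel $(2\pi)^2|C_{\alpha_1}|^2|C_{\alpha_2}|^2=|\csc\alpha_1\csc\alpha_2|$.

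The other natural route stays inside the fractional setting and uses the inverse transform $\mathcal{H}_{\alpha_1,\alpha_2}$ the paper defines. Write $\overline{f_2}$ through $\mathcal{H}_{\alpha_1,\alpha_2}\{\mathcal{F}_{\alpha_1,\alpha_2}\{f_2\}\}$ and use $\overline{K_{-\alpha_k}}=K_{\alpha_k}$. Then move the trailing factor $K_{\alpha_1}$ to the front by $\mathrm{Sc}[pq]=\mathrm{Sc}[qp]$ and apply Fubini. That route needs an inversion theorem and integrability of $\mathcal{F}_{\alpha_1,\alpha_2}\{f_2\}$. Yours needs only the QFT Plancherel theorem.

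Yours also makes the $L^2$ extension transparent. The statement tacitly relies on that extension, since the transform is defined only on $L^1$. In your picture the extended operator is just a composition of unimodular chirp multiplications, a dilation, and the $L^2$-QFT.

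You can also drop the noncommutativity bookkeeping you flag as the main obstacle. Since $|pq|=|p||q|$ in $\mathbb{H}$ and the chirps are unimodular, your factorisation gives pointwise
$|\mathcal{F}_{\alpha_1,\alpha_2}\{f\}(\textbf{w})|=2\pi|C_{\alpha_1}||C_{\alpha_2}|\,|\mathcal{F}_q\{g\}(w_1\csc\alpha_1,w_2\csc\alpha_2)|$, with $|g|=|f|$. So Parseval follows from the QFT Parseval identity and the change of variables alone. The scalar Plancherel identity then follows by real polarisation. Indeed $\mathcal{F}_{\alpha_1,\alpha_2}$ is $\mathbb{R}$-linear and $\|f_1\pm f_2\|^2=\|f_1\|^2+\|f_2\|^2\pm 2\,\mathrm{Sc}\langle f_1,f_2\rangle$. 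The same remark shows that the scalar QFT Plancherel theorem you cite is equivalent to the QFT Parseval identity.
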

	\begin{lem}(\!\!\cite{10})
		Let the function \(f \in S(\mathbb{R}^{2} ; \mathbb{H})\), for each component \(x_{k}\) and \(w_{k}\) with \(k=1,2\), the following identity holds:
		\[
		\begin{aligned}
		\mathcal{F}_{\alpha_{1}, \alpha_{2}}\!\left\{\frac{\partial f}{\partial x_{k}}\right\}(\!\textbf{w}) \!\!
		& =\!\!
		\begin{cases}
		i \cot \alpha_{1} \mathcal{F}_{\alpha_{1}, \alpha_{2}}\left\{x_{1} f\right\}(\!\textbf{w}) \!+ \!i w_{1} \csc \alpha_{1} \mathcal{F}_{\alpha_{1}, \alpha_{2}}\{f\}(\!\textbf{w}), & \!\!k=1 ; \\
		-\cot \alpha_{2} \mathcal{F}_{\alpha_{1}, \alpha_{2}}\left\{x_{2} f\right\}(\!\textbf{w})j\! +\! w_{2}\! \csc \alpha_{2} \mathcal{F}_{\alpha_{1}, \alpha_{2}}\{f\}(\!\textbf{w}) j, &\!\! k=2 .
		\end{cases}
		\end{aligned}
		\]
	\end{lem}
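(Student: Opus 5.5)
The statement immediately above is the derivative formula for the two-sided QFrFT, Lemma 3.4 (cited from \cite{10}). I would prove it directly from the definition by integrating by parts in the variable $x_k$, treating $k=1$ and $k=2$ separately. The separation is forced by the two-sided, noncommutative structure: the $x_1$-kernel multiplies $f$ from the left and the $x_2$-kernel from the right. Since $f\in S(\mathbb{R}^2;\mathbb{H})$, every integral converges absolutely and all boundary terms vanish, because the kernels have modulus $|C_{\alpha_k}|$ and $f$ decays faster than any polynomial.

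For $k=1$, write
\[
\mathcal{F}_{\alpha_1,\alpha_2}\Bigl\{\frac{\partial f}{\partial x_1}\Bigr\}(\textbf{w})=\int_{\mathbb{R}^2}K_{\alpha_1}(x_1,w_1)\,\frac{\partial f}{\partial x_1}(\textbf{x})\,K_{\alpha_2}(x_2,w_2)\,d\textbf{x}
\]
and integrate by parts in $x_1$. The factor $K_{\alpha_2}$ does not depend on $x_1$, so this gives
\[
-\int_{\mathbb{R}^2}\frac{\partial K_{\alpha_1}}{\partial x_1}(x_1,w_1)\,f(\textbf{x})\,K_{\alpha_2}(x_2,w_2)\,d\textbf{x}.
\]
The exponent of $K_{\alpha_1}$ and the constant $C_{\alpha_1}$ both lie in the commutative subalgebra $\mathbb{R}+i\mathbb{R}$, so the chain rule applies as for complex exponentials:
\[
\frac{\partial K_{\alpha_1}}{\partial x_1}=\bigl(i x_1\cot\alpha_1-i w_1\csc\alpha_1\bigr)K_{\alpha_1}.
\]
This factor stands to the left of $f$, which is exactly where $K_{\alpha_1}$ sits. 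The real scalars $x_1$ and $w_1$ commute with everything, and $i$ commutes with $K_{\alpha_1}$. The result is therefore a combination of $\mathcal{F}\{x_1f\}$ and $w_1\mathcal{F}\{f\}$, each premultiplied by a multiple of $i$, which is the shape of the stated formula.

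For $k=2$, integrate by parts in $x_2$. Now $\partial_{x_2}K_{\alpha_2}=(j x_2\cot\alpha_2-j w_2\csc\alpha_2)K_{\alpha_2}$, and this factor stands to the right of $f$. Since $j$ commutes with $K_{\alpha_2}$, the factor can be moved to the far right of the integrand, giving $\mathcal{F}\{x_2f\}j$ and $w_2\mathcal{F}\{f\}j$ with right multiplication by $j$. The $x_2$-independent kernel $K_{\alpha_1}$ on the left is unaffected.

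The main obstacle is not the computation but bookkeeping signs and sides. I would recheck the sign from the integration by parts and the coefficients $\cot\alpha_k$ and $\csc\alpha_k$ against the stated formula, keeping in mind that the kernel conventions, and the sign conventions of the cited source \cite{10}, may shift the signs. One must also confirm that the commutation steps are legitimate: $i$ commutes with $K_{\alpha_1}$ and $j$ with $K_{\alpha_2}$, but $i$ need not commute with $f$, so the factors must remain on their original side.
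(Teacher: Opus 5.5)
The paper gives no proof of this lemma (it is quoted from \cite{10}). Integrating by parts against the kernel of Definition 3.1 is the natural route, and your handling of the noncommutativity is sound: $i$ stays on the left beside $K_{\alpha_1}$, and $j$ moves to the far right past $K_{\alpha_2}$. Your treatment of the boundary terms is also fine.

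The gap is that you stop exactly at the step that decides the statement. You never evaluate the signs, and you suggest that conventions might shift them. They cannot: Definition 3.1 fixes the kernel, so nothing is left to convention. Carrying out your own computation for $k=1$ gives
\[
\begin{aligned}
\mathcal{F}_{\alpha_1,\alpha_2}\Bigl\{\frac{\partial f}{\partial x_1}\Bigr\}(\textbf{w})
&=-\int_{\mathbb{R}^2}\bigl(i x_1\cot\alpha_1-i w_1\csc\alpha_1\bigr)K_{\alpha_1}(x_1,w_1)\,f(\textbf{x})\,K_{\alpha_2}(x_2,w_2)\,d\textbf{x}\\
&=-i\cot\alpha_1\,\mathcal{F}_{\alpha_1,\alpha_2}\{x_1f\}(\textbf{w})+i w_1\csc\alpha_1\,\mathcal{F}_{\alpha_1,\alpha_2}\{f\}(\textbf{w}).
\end{aligned}
\]
For $k=2$ it gives $-\cot\alpha_2\,\mathcal{F}_{\alpha_1,\alpha_2}\{x_2f\}(\textbf{w})\,j+w_2\csc\alpha_2\,\mathcal{F}_{\alpha_1,\alpha_2}\{f\}(\textbf{w})\,j$, exactly as printed.

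So your argument proves the $k=2$ line. The $k=1$ line as printed, with $+i\cot\alpha_1$, is false whenever $\cot\alpha_1\neq 0$. The two versions differ by $2i\cot\alpha_1\,\mathcal{F}_{\alpha_1,\alpha_2}\{x_1f\}$, whose $L^2$ norm is $2|\cot\alpha_1|\,\|x_1f\|_{L^2}>0$ for every nonzero $f$, by the Parseval identity (Lemma 3.3).

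The printed sign is therefore a misprint, and the corrected sign is in fact the one the paper uses in Theorem 4.5. Left-multiplying the corrected identity by $-i\sin\alpha_1$ gives $w_1\mathcal{F}_{\alpha_1,\alpha_2}\{f\}=-i\sin\alpha_1\,\mathcal{F}_{\alpha_1,\alpha_2}\{\partial f/\partial x_1\}+\cos\alpha_1\,\mathcal{F}_{\alpha_1,\alpha_2}\{x_1f\}$. The printed version would instead produce $-\cos\alpha_1$ in the second term.

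Rather than asserting that the result has ``the shape of the stated formula,'' your proof should finish the computation. It should then state the $k=1$ identity with $-i\cot\alpha_1$ and note that the statement, as written, cannot be proved for $k=1$.
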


 	\section{Proof of main results}

	In this section, we provide the proof of the main result. To this end, we first present some preliminary lemmas. Next, we introduce the UP in the directional domain. Finally, combining these preparations, we complete the proof of the main theorem.

	The following lemma shows that \(Sc\left[\left( \frac{\partial}{\partial x_{k}} e^{\textbf{u}(\textbf{x}) \theta(\textbf{x})}\right) \left( e^{-\textbf{u}(\textbf{x}) \theta(\textbf{x})}\right) \right] =0\), Then  \(\left( \frac{\partial}{\partial x_{k}} e^{\textbf{u}(\textbf{x}) \theta(\textbf{x})}\right) \left( e^{-\textbf{u}(\textbf{x}) \theta(\textbf{x})}\right)=NSc\left[ \left( \frac{\partial}{\partial x_{k}} e^{\textbf{u}(\textbf{x}) \theta(\textbf{x})}\right) \left( e^{-\textbf{u}(\textbf{x}) \theta(\textbf{x})}\right)\right] \). In the following article, all instances of \(\left( \frac{\partial}{\partial x_{k}} e^{\textbf{u}(\textbf{x}) \theta(\textbf{x})}\right) \left( e^{-\textbf{u}(\textbf{x}) \theta(\textbf{x})}\right)\) represent only its non-scalar part.
	
	\begin{lem}(\!\!\cite{16}) Suppose  a quaternion-valued signal \(f(\textbf{x}) = \rho(\textbf{x}) e^{\textbf{u}(\textbf{x}) \theta(\textbf{x})}\), if \(\frac{\partial \textbf{u}}{\partial x_{k}}\) and \(\frac{\partial \theta}{\partial x_{k}}\) exist for \(k=1,2\), then the scalar part of
	\[
	\left[\frac{\partial}{\partial x_{k}} e^{\textbf{u}(\textbf{x}) \theta(\textbf{x})}\right]\left[e^{-\textbf{u}(\textbf{x}) \theta(\textbf{x})}\right]
	\]
	is zero.
	\end{lem}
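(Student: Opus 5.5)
The plan is to write $E(\textbf{x}) := e^{\textbf{u}(\textbf{x})\theta(\textbf{x})}$ and differentiate the unit-modulus identity $E\overline{E}=1$. Since $|E|=1$, we have $\overline{E}=E^{-1}=e^{-\textbf{u}(\textbf{x})\theta(\textbf{x})}$ by (2.1); this is a pointwise algebraic fact, as $\textbf{u}$ is a pure unit quaternion and $\overline{\cos\theta+\textbf{u}\sin\theta}=\cos\theta-\textbf{u}\sin\theta$. Assuming $\partial_{x_k}\textbf{u}$ and $\partial_{x_k}\theta$ exist, $E$ is differentiable in $x_k$ componentwise. Conjugation is real-linear and commutes with real partial derivatives, so $\partial_{x_k}\overline{E}=\overline{\partial_{x_k}E}$.

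Differentiating $E\overline{E}=1$ with the product rule, keeping the order of the noncommutative factors, gives
\[
\left(\frac{\partial E}{\partial x_k}\right)\overline{E}+E\,\overline{\left(\frac{\partial E}{\partial x_k}\right)}=0 .
\]
Set $q:=\left(\frac{\partial E}{\partial x_k}\right)\overline{E}$. By $\overline{pq}=\overline{q}\,\overline{p}$, the second term equals $\overline{q}$. Hence $q+\overline{q}=0$, that is, $2\,Sc[q]=0$. Since $\overline{E}=e^{-\textbf{u}\theta}$, this is exactly the claim.

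As a sanity check, or as an alternative route, one can expand directly. We have $\partial_k E=-\sin\theta\,\theta_k+\textbf{u}_k\sin\theta+\textbf{u}\cos\theta\,\theta_k$, and we multiply this by $\cos\theta-\textbf{u}\sin\theta$, using $\textbf{u}^2=-1$. For the term $\textbf{u}_k\textbf{u}$, note that differentiating $|\textbf{u}|^2=1$ gives $\textbf{u}\cdot\textbf{u}_k=0$ (Euclidean dot product). Hence $Sc[\textbf{u}_k\textbf{u}]=0$ and $\textbf{u}_k$ is a pure quaternion. The scalar contributions are then $-\sin\theta\cos\theta\,\theta_k+\cos\theta\sin\theta\,\theta_k=0$, which confirms the result.

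The only delicate point is justifying differentiability and the identity $\partial\overline{E}=\overline{\partial E}$ at points where $\textbf{u}$ is defined; this is handled by the hypothesis. The product-rule argument avoids it entirely, so I expect no real obstacle.
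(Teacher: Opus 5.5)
Your proof is correct. The paper gives no argument of its own for this lemma; it simply quotes it from \cite{16}, so there is nothing in the text to match against.

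Of your two routes, the first is the more conceptual one: you differentiate $E\overline{E}=1$ and observe that the two product-rule terms are $q$ and $\overline{q}$. It never uses the form $E=\cos\theta+\textbf{u}\sin\theta$, only that $E$ is a unit-quaternion-valued function differentiable in $x_k$. So it proves the more general fact that $(\partial_{x_k}E)E^{-1}$ is always a pure quaternion. Its only ingredients are $\overline{pq}=\overline{q}\,\overline{p}$ and $\partial_{x_k}\overline{E}=\overline{\partial_{x_k}E}$. Your closing sentence says the product-rule argument avoids the latter identity, but it does use it. This is harmless, since conjugation is real-linear and commutes with partial derivatives.

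The second route is the explicit expansion using $\textbf{u}^2=-1$ and $\textbf{u}\cdot\textbf{u}_k=0$. It is the classical computation, longer but more informative, because it produces the pure part itself. Writing $\theta_k=\partial_{x_k}\theta$ and $\textbf{u}_k=\partial_{x_k}\textbf{u}$, the pure part is $\theta_k\textbf{u}+\sin\theta\cos\theta\,\textbf{u}_k-\sin^2\theta\,\textbf{u}_k\textbf{u}$. Since $\textbf{u}\perp\textbf{u}_k$, the three summands are mutually orthogonal and $|\textbf{u}_k\textbf{u}|=|\textbf{u}_k|$, so $|(\partial_{x_k}E)\overline{E}|^2=\theta_k^2+\sin^2\theta\,|\textbf{u}_k|^2$. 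That formula makes concrete the quantity appearing in Lemma 4.2, Theorem 4.5 and $COV_{x_kw_k}$. For the present lemma alone, the first argument is all you need.
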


    \begin{lem}(\!\!\cite{16})
 Let \(f(\textbf{x}) = \rho(\textbf{x}) e^{\textbf{u}(\textbf{x}) \theta(\textbf{x})}\), if \(\frac{\partial}{\partial x_{k}} f(\textbf{x})\) exists for \(k=1,2\), then
\[
\left| \frac{\partial}{\partial x_{k}} f(\textbf{x}) \right|^{2} = \left(  \frac{\partial}{\partial x_{k}} \rho(\textbf{x}) \right) ^{2} + \rho^{2}(\textbf{x}) \left|  \left( \frac{\partial}{\partial x_{k}} e^{\textbf{u}(\textbf{x}) \theta(\textbf{x})} \right) \left( e^{-\textbf{u}(\textbf{x}) \theta(\textbf{x})} \right)  \right|^{2}.
\]
	\end{lem}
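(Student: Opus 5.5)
We prove the modulus identity by direct differentiation of the polar form and expansion of the squared modulus. The only input needed is the preceding lemma (from \cite{16}) that the scalar part of \(\left(\frac{\partial}{\partial x_k} e^{\textbf{u}\theta}\right)e^{-\textbf{u}\theta}\) vanishes.

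Write \(E(\textbf{x}) := e^{\textbf{u}(\textbf{x})\theta(\textbf{x})}\). Since \(E=\cos\theta+\textbf{u}\sin\theta\) with \(\textbf{u}\) a unit pure quaternion, we have \(|E|=1\) and \(\overline{E}=e^{-\textbf{u}\theta}=E^{-1}\). Because \(\rho\) is real-valued, the product rule gives
\[
\frac{\partial f}{\partial x_k} = \left(\frac{\partial \rho}{\partial x_k}\right)E + \rho\,\frac{\partial E}{\partial x_k}.
\]
Factor \(E\) on the right:
\[
\frac{\partial f}{\partial x_k} = \left[\frac{\partial \rho}{\partial x_k} + \rho\left(\frac{\partial E}{\partial x_k}\right)E^{-1}\right]E .
\]
The modulus is multiplicative on \(\mathbb{H}\) and \(|E|=1\), so
\[
\left|\frac{\partial f}{\partial x_k}\right|^2 = \left|a + \rho\, q\right|^2,\qquad a:=\frac{\partial \rho}{\partial x_k}\in\mathbb{R},\quad q:=\left(\frac{\partial E}{\partial x_k}\right)e^{-\textbf{u}\theta}.
\]

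The key step is that \(q\) is a pure quaternion. This is exactly the statement of the previous lemma, which applies because the existence of \(\partial f/\partial x_k\) gives the needed differentiability of \(\textbf{u}\) and \(\theta\), at least where \(\rho\neq 0\). For a real number \(a\) and a pure quaternion \(p=\rho q\), we have
\[
|a+p|^2=(a+p)(a-p)=a^2 - p^2 = a^2+|p|^2 .
\]
The cross terms \(ap\) and \(-pa\) cancel because reals commute with all quaternions, and \(p^2=-|p|^2\) for pure \(p\). Substituting back gives
\[
\left|\frac{\partial f}{\partial x_k}\right|^2=\left(\frac{\partial\rho}{\partial x_k}\right)^2+\rho^2\left|\left(\frac{\partial E}{\partial x_k}\right)e^{-\textbf{u}\theta}\right|^2 .
\]

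The main obstacle is regularity. The polar decomposition degenerates where \(\rho=0\), and also where \(\theta\in\{0,\pi\}\), since there \(\textbf{u}\) is undefined. At such points we interpret the identity with the convention that \(\rho^2|q|^2\) is zero or is extended by continuity. Since the identity is used only under integrals, verifying it almost everywhere, on the set where the polar data are differentiable, suffices.
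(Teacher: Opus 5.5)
Your proof is correct and is essentially the standard argument of \cite{16}, which the paper quotes without proof. You factor $\frac{\partial f}{\partial x_k}=\bigl[\frac{\partial\rho}{\partial x_k}+\rho\,q\bigr]e^{\textbf{u}(\textbf{x})\theta(\textbf{x})}$, use $|e^{\textbf{u}(\textbf{x})\theta(\textbf{x})}|=1$, and split $|a+\rho q|^{2}=a^{2}+\rho^{2}|q|^{2}$ because $q$ is pure by Lemma 4.1.

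One small correction to your regularity remark: differentiability of $f$ where $\rho\neq 0$ does not give differentiability of $\textbf{u}$ and $\theta$ separately, since they can break down where $\theta\in\{0,\pi\}$. You do not need that, however: your $E=f/\rho$ is differentiable wherever $\rho\neq 0$, and differentiating $E\overline{E}=1$ gives $q+\overline{q}=0$ directly.
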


\begin{lem}
 Let \(f(\textbf{x}) = \rho(\textbf{x}) e^{\textbf{u}(\textbf{x}) \theta(\textbf{x})}\), if \(\frac{\partial \textbf{u}}{\partial x_{k}}\) and \(\frac{\partial \theta}{\partial x_{k}}\) exist for \(k=1,2\), let \(e_{1}=i,e_{2}=j\), we have
	\[
	Sc\left\lbrace e_{k}\frac{\partial}{\partial x_{k}} f(\textbf{x}) x_{k}\overline{f(\textbf{x})}\right\rbrace =Sc\left\lbrace e_{k}x_{k}\rho^{2}(\textbf{x})\left(\frac{\partial}{\partial x_{k}} e^{\textbf{u}(\textbf{x}) \theta(\textbf{x})}\right)\left(e^{-\textbf{u}(\textbf{x}) \theta(\textbf{x})}\right)\right\rbrace.
	\]
	Proof.
	Since \(\frac{\partial}{\partial x_{k}} f(\textbf{x})=\frac{\partial \rho (\textbf{x})}{\partial x_{k}} e^{\textbf{u}(\textbf{x}) \theta(\textbf{x})}+\rho (\textbf{x})\frac{\partial}{\partial x_{k}} e^{\textbf{u}(\textbf{x}) \theta(\textbf{x})}
	\) ,
	\(\overline{f(x)}=\rho(\textbf{x}) e^{-\textbf{u}(\textbf{x}) \theta(\textbf{x})}\), substituting the above two equations, we obtain
	\[
	\begin{aligned}
	e_{k} \frac{\partial}{\partial x_{k}} f(\textbf{x}) x_{k}\overline{f(\textbf{x})}
	&=e_{k}\left[ \frac{\partial \rho (\textbf{x})}{\partial x_{k}} e^{\textbf{u}(\textbf{x}) \theta(\textbf{x})}+\rho (\textbf{x})\frac{\partial}{\partial x_{k}} e^{\textbf{u}(\textbf{x}) \theta(\textbf{x})}\right] x_{k}\rho (\textbf{x})e^{-\textbf{u}(\textbf{x}) \theta(\textbf{x})}\\
	&=e_{k}x_{k}\rho (\textbf{x})\frac{\partial \rho (\textbf{x})}{\partial x_{k}}+e_{k}x_{k}\rho^{2}(\textbf{x})\left(\frac{\partial}{\partial x_{k}} e^{\textbf{u}(\textbf{x}) \theta(\textbf{x})}\right)\left(e^{-\textbf{u}(\textbf{x}) \theta(\textbf{x})}\right).
	\end{aligned}
	\]
		Obviously, the first term on the right-hand side of the equation is the none-scalar part, then
	\[
	Sc\left\lbrace e_{k}\frac{\partial}{\partial x_{k}} f(\textbf{x}) x_{k}\overline{f(\textbf{x})}\right\rbrace =Sc\left\lbrace e_{k}x_{k}\rho^{2}(\textbf{x})\left(\frac{\partial}{\partial x_{k}} e^{\textbf{u}(\textbf{x}) \theta(\textbf{x})}\right)\left(e^{-\textbf{u}(\textbf{x}) \theta(\textbf{x})}\right)\right\rbrace .
	\]
\end{lem}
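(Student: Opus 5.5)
The identity will follow from expanding the derivative with the product rule and then keeping track of which factors are real, and hence central in $\mathbb{H}$. By the polar form, $f(\textbf{x})=\rho(\textbf{x})e^{\textbf{u}(\textbf{x})\theta(\textbf{x})}$ with $\rho$ real-valued. The existence of $\frac{\partial \textbf{u}}{\partial x_k}$ and $\frac{\partial\theta}{\partial x_k}$ (together with differentiability of $\rho$, implicit in the hypotheses) lets me write
\[
\frac{\partial}{\partial x_k} f(\textbf{x})=\frac{\partial \rho(\textbf{x})}{\partial x_k}\,e^{\textbf{u}(\textbf{x})\theta(\textbf{x})}+\rho(\textbf{x})\,\frac{\partial}{\partial x_k}e^{\textbf{u}(\textbf{x})\theta(\textbf{x})} .
\]
Here the order of the quaternion factors has to be preserved, but $\rho$ and its derivative may be placed anywhere because they are real. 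For the conjugate, since $\textbf{u}(\textbf{x})$ is a pure unit quaternion, $\overline{e^{\textbf{u}\theta}}=\cos\theta-\textbf{u}\sin\theta=e^{-\textbf{u}\theta}$, so $\overline{f(\textbf{x})}=\rho(\textbf{x})e^{-\textbf{u}(\textbf{x})\theta(\textbf{x})}$.

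Next I substitute both expressions into $e_k\frac{\partial}{\partial x_k}f(\textbf{x})\,x_k\overline{f(\textbf{x})}$ and move the real scalars $x_k$ and $\rho$ to the front of each product. This produces two terms. The first is $e_k x_k\rho\,\frac{\partial\rho}{\partial x_k}\,e^{\textbf{u}\theta}e^{-\textbf{u}\theta}$. Because $\textbf{u}$ commutes with itself, $e^{\textbf{u}\theta}e^{-\textbf{u}\theta}=1$, which is the same as $|e^{\textbf{u}\theta}|=1$. So the first term reduces to $e_k$ times a real function. The second term is exactly $e_k x_k\rho^2\big(\frac{\partial}{\partial x_k}e^{\textbf{u}\theta}\big)e^{-\textbf{u}\theta}$, which is the quantity on the right-hand side of the lemma.

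Finally I take scalar parts, using that $Sc$ is real-linear. Since $e_1=i$ and $e_2=j$ are pure imaginary units, the first term $e_k\cdot(\text{real})$ is a pure quaternion, so its scalar part is $0$. Only the second term remains, which gives the claimed identity.

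I expect no real obstacle. The one point needing care is noncommutativity: $e_k$ must stay on the left of $\frac{\partial}{\partial x_k}e^{\textbf{u}\theta}$, and I may only reorder factors that are real. I would also not invoke Lemma 4.1 here. It shows that $\big(\frac{\partial}{\partial x_k}e^{\textbf{u}\theta}\big)e^{-\textbf{u}\theta}$ is pure, which is needed later to evaluate the scalar part of the second term, but it is not needed for this identity.
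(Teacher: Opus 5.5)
Your proposal is correct and follows essentially the same route as the paper. Both arguments use the product rule for $\frac{\partial}{\partial x_k}f$ and the identity $\overline{f}=\rho\, e^{-\textbf{u}\theta}$, and both note that the first resulting term, $e_k x_k\rho\,\frac{\partial \rho}{\partial x_k}$, is a pure quaternion, so its scalar part vanishes. Your version also spells out $e^{\textbf{u}\theta}e^{-\textbf{u}\theta}=1$ and the needed differentiability of $\rho$, which the paper uses without comment.
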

\hfill $\square$
\begin{lem}
	 Let \(f(\textbf{x}) = \rho(\textbf{x}) e^{\textbf{u}(\textbf{x}) \theta(\textbf{x})}\) , if \(\frac{\partial}{\partial x_{k}} f(\textbf{x})\) exists for \(k=1,2\), then
	\[
	\begin{aligned}
&\int_{\mathbb{R}^{2}}\!\!i\mathcal{F}_{\alpha_{1},\alpha_{2}}\!\left\{\!\frac{\partial f}{\partial x_{1}}\!\right\}\!(\!\textbf{w}) \overline{\mathcal{F}_{\alpha_{1},\alpha_{2}}\!\!\left\{x_{1}f\right\}\!\!(\textbf{w})} d\textbf{w}  \!+\!\!\int_{\mathbb{R}^{2}}\!\!\mathcal{F}_{\alpha_{1},\alpha_{2}}\!\!\left\lbrace\! x_{1}f\right\rbrace\!\! (\!\textbf{w}) \overline{i\mathcal{F}_{\alpha_{1},\alpha_{2}}\!\!\left\{\frac{\partial f}{\partial x_{1}}\right\}\!\!(\!\textbf{w})} d\textbf{w}.\\
=&2\int_{\mathbb{R}^{2}}Sc\left[ i\mathcal{F}_{\alpha_{1},\alpha_{2}}\left\{\frac{\partial f}{\partial x_{1}}\right\}(\textbf{w}) \overline{\mathcal{F}_{\alpha_{1},\alpha_{2}}\left\{x_{1}f\right\}(\textbf{w})} \right] d\textbf{w}.
	\end{aligned}	
	\]
		Proof. Let \(f=i\mathcal{F}_{\alpha_{1},\alpha_{2}}\left\{\frac{\partial f}{\partial x_{1}}\right\}(\textbf{w}) , g=\mathcal{F}_{\alpha_{1},\alpha_{2}}\left\{x_{1}f\right\}(\textbf{w}) \), we know
	\[
\int_{\mathbb{R}^{2}}f\overline{g}d\textbf{w}+\int_{\mathbb{R}^{2}}g\overline{f}d\textbf{w}=\int_{\mathbb{R}^{2}}f\overline{g}+\overline{f\overline{g}}d\textbf{w}=2\int_{\mathbb{R}^{2}}Sc(f\overline{g})d\textbf{w}.
	\]
	Thus, we obtain the final result.
\end{lem}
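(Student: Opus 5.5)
The statement just above is the last lemma: the identity
\[
\int_{\mathbb{R}^{2}} A\overline{B}\,d\textbf{w}+\int_{\mathbb{R}^{2}} B\overline{A}\,d\textbf{w}=2\int_{\mathbb{R}^{2}} Sc\left[A\overline{B}\right] d\textbf{w},
\]
where \(A(\textbf{w})=i\mathcal{F}_{\alpha_{1},\alpha_{2}}\{\frac{\partial f}{\partial x_{1}}\}(\textbf{w})\) and \(B(\textbf{w})=\mathcal{F}_{\alpha_{1},\alpha_{2}}\{x_{1}f\}(\textbf{w})\). I would introduce these two names rather than reuse \(f\), so the signal and the auxiliary function are not confused. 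The proof reduces to a pointwise quaternion identity, integrated over \(\mathbb{R}^2\).

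First I would check that both integrals are finite. By hypothesis \(\frac{\partial f}{\partial x_1}\) and \(x_1 f\) lie in \(L^2(\mathbb{R}^2,\mathbb{H})\), as in Theorem 1.1. Lemma 3.3 (Parseval) then gives \(A,B\in L^2\), since left multiplication by \(i\) preserves the modulus. Because \(|A\overline{B}|=|A||B|\), Cauchy--Schwarz shows \(A\overline{B}\in L^1\). The same holds for \(B\overline{A}\).

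Next comes the pointwise step. Using the antiinvolution \(\overline{pq}=\overline{q}\,\overline{p}\) and \(\overline{\overline{q}}=q\), we get \(B\overline{A}=\overline{A\overline{B}}\) at every \(\textbf{w}\). For any quaternion \(q=q_0+\underline{q}\) we have \(q+\overline{q}=2q_0=2Sc[q]\). Applying this with \(q=A\overline{B}\) gives \(A\overline{B}+B\overline{A}=2Sc[A\overline{B}]\) pointwise.

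Finally I would integrate this identity and use linearity of the integral, which is legitimate because both terms are in \(L^1\). This yields the claimed equality.

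There is no real obstacle here. The only care needed is the integrability justification and the order reversal under conjugation, which is what allows the noncommutativity of \(\mathbb{H}\) to be ignored.
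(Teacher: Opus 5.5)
Your proof is correct and uses the same argument as the paper. Both proofs use the antiinvolution to get \(B\overline{A}=\overline{A\overline{B}}\) pointwise, then \(q+\overline{q}=2Sc[q]\), then integrate. Your renaming of the auxiliary functions (the paper confusingly reuses \(f\)) and your \(L^1\) check via Parseval and Cauchy--Schwarz are sensible additions the paper omits; note that the check draws on the \(L^2\) assumptions on \(x_1 f\) and \(\partial f/\partial x_1\) from the surrounding theorems, not on the lemma's own stated hypothesis.
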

\hfill $\square$

 Due to the non-commutative nature of quaternions, directly computing the QFrFT is complicated. The following theorem provides an effective way to evaluate \(\int_{\mathbb{R}^{2}} w_{k}^{2}|\mathcal{F}_{\alpha_{1},\alpha_{2}}\{f\}(\textbf{w})|^{2} d \textbf{w}\). Using this formula, we can greatly simplify the subsequent derivation of the UP.
    \begin{thm}
     Let \(f(\textbf{x}) = \rho(\textbf{x}) e^{\textbf{u}(\textbf{x}) \theta(\textbf{x})}\), if \(f \in L^{1}(\mathbb{R}^{2}, \mathbb{H}) \cap L^{2}(\mathbb{R}^{2}, \mathbb{H})\), and  \(\frac{\partial}{\partial x_{1}} f\),\(\frac{\partial}{\partial x_{2}} f\) exists and is also in \(L^{2}(\mathbb{R}^{2}, \mathbb{H})\). Let \(e_{1}=i,e_{2}=j\), then
    \[
    \begin{aligned}
    &\int_{\mathbb{R}^{2}} w_{k}^{2}|\mathcal{F}_{\alpha_{1},\alpha_{2}}\{f\}(\textbf{w})|^{2} d \textbf{w} \\
    =&\sin^{2}\alpha_{k}\int_{\mathbb{R}^{2}}\left( \frac{\partial}{\partial x_{k}} \rho(\textbf{x})\right) ^{2} d \textbf{x}+\cos^{2}\alpha_{k}\int_{\mathbb{R}^{2}} x_{k}^{2}\rho^{2}(\textbf{x})d\textbf{x} \\
     &+\sin^{2}\alpha_{k} \int_{\mathbb{R}^{2}} \rho^{2}(\textbf{x})\left|\left(\frac{\partial}{\partial x_{k}} e^{\textbf{u}(\textbf{x}) \theta(\textbf{x})}\right)\left(e^{-\textbf{u}(\textbf{x}) \theta(\textbf{x})}\right)\right|^{2} d \textbf{x}\\
    &  -2\sin\alpha_{k}\cos\alpha_{k}\int_{\mathbb{R}^{2}} x_{k}\rho^{2}(\textbf{x})Sc\left(   e_{k}  \left( \frac{\partial}{\partial x_{k}} e^{\textbf{u}(\textbf{x}) \theta(\textbf{x})} \right) \left( e^{-\textbf{u}(\textbf{x}) \theta(\textbf{x})} \right)\right)   d\textbf{x}.
    \end{aligned}
    \]
\end{thm}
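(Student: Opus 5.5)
The plan is to work directly with the derivative formula of Lemma 3.4 and isolate $w_k\mathcal{F}_{\alpha_1,\alpha_2}\{f\}$. Take $k=1$ (the case $k=2$ is symmetric, with $j$ acting on the right). Lemma 3.4 gives
\[
i\csc\alpha_1\, w_1\mathcal{F}_{\alpha_1,\alpha_2}\{f\}=\mathcal{F}_{\alpha_1,\alpha_2}\{\partial_{x_1}f\}-i\cot\alpha_1\mathcal{F}_{\alpha_1,\alpha_2}\{x_1f\}.
\]
Multiplying on the left by $-i\sin\alpha_1$ yields
\[
w_1\mathcal{F}_{\alpha_1,\alpha_2}\{f\}=-i\sin\alpha_1\,\mathcal{F}_{\alpha_1,\alpha_2}\{\partial_{x_1}f\}-\cos\alpha_1\,\mathcal{F}_{\alpha_1,\alpha_2}\{x_1f\}.
\]
Since $|iq|=|q|$, we have $|w_1\mathcal{F}\{f\}|^2$ equal to $\sin^2\alpha_1|\mathcal{F}\{\partial_1 f\}|^2+\cos^2\alpha_1|\mathcal{F}\{x_1f\}|^2$ plus a cross term. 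That cross term is $\sin\alpha_1\cos\alpha_1$ times the sum appearing in Lemma 4.4, which equals $2\sin\alpha_1\cos\alpha_1\,Sc[\,i\mathcal{F}\{\partial_1f\}\overline{\mathcal{F}\{x_1f\}}\,]$. For $k=2$ the same algebra holds with right multiplication by $j$. Since $|qj|=|q|$, the cross term becomes $Sc[\mathcal{F}\{\partial_2 f\}j\,\overline{\mathcal{F}\{x_2f\}}]$, which by cyclicity of $Sc$ is also a scalar part of a product.

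Next, integrate over $\mathbf{w}$. The two squared terms are handled by the Parseval identity of Lemma 3.3. This gives $\int|\partial_kf|^2\,d\mathbf{x}$, which by Lemma 4.2 splits into $\int(\partial_k\rho)^2$ plus $\int\rho^2|(\partial_k e^{\mathbf{u}\theta})e^{-\mathbf{u}\theta}|^2$. It also gives $\int x_k^2|f|^2=\int x_k^2\rho^2$. These are the first three terms of the claimed formula.

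The main obstacle is the cross term: we must transport $\int Sc[e_k\mathcal{F}\{\partial_kf\}\overline{\mathcal{F}\{x_kf\}}]\,d\mathbf{w}$ back to the spatial domain as $\int Sc[e_k\,\partial_kf\,x_k\overline{f}]\,d\mathbf{x}$. Plancherel (Lemma 3.3) only gives $Sc\langle\mathcal{F}g,\mathcal{F}h\rangle=Sc\langle g,h\rangle$, with no $e_k$ inserted. For $k=1$, I would first try to show $i\mathcal{F}_{\alpha_1,\alpha_2}\{g\}=\mathcal{F}_{\alpha_1,\alpha_2}\{ig\}$. This holds because the left kernel $K_{\alpha_1}$ lies in the complex plane $\mathbb{C}_i$ and so commutes with $i$. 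Then Plancherel applied to $ig$ and $h$ finishes the step. For $k=2$, I would use instead that $K_{\alpha_2}\in\mathbb{C}_j$, so $\mathcal{F}\{g\}j=\mathcal{F}\{gj\}$. Plancherel then gives $\int Sc[\partial_2f\,j\,x_2\overline f]$, and cyclicity of $Sc$ turns this into $Sc[j\,\partial_2 f\,x_2\overline f]$.

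Finally, apply Lemma 4.3 to replace $Sc[e_k\partial_kf\,x_k\overline f]$ by $x_k\rho^2 Sc[e_k(\partial_ke^{\mathbf{u}\theta})e^{-\mathbf{u}\theta}]$. The sign of this last term depends on the sign convention in Lemma 3.4, so I would check it against the case $\alpha_k=\pi/2$, where the cross term vanishes. Carrying the minus sign from $-i\sin\alpha_1\cdot(-\cos\alpha_1)$ through gives the stated coefficient $-2\sin\alpha_k\cos\alpha_k$. The sign is then confirmed by expanding with $\overline{iq}=-\overline{q}\,i$.
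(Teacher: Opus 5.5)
Your route is the paper's: solve the derivative rule for $w_k\mathcal{F}_{\alpha_1,\alpha_2}\{f\}$, expand the modulus, apply Lemma 4.4 and Plancherel, then Lemmas 4.2 and 4.3. Your remark that $i\mathcal{F}_{\alpha_1,\alpha_2}\{g\}=\mathcal{F}_{\alpha_1,\alpha_2}\{ig\}$, because $K_{\alpha_1}$ is $\mathbb{C}_i$-valued, justifies a step the paper uses silently. However, two steps fail as written.

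\textbf{The sign for $k=1$.} Write $A=\mathcal{F}_{\alpha_1,\alpha_2}\{\partial_{x_1}f\}$ and $B=\mathcal{F}_{\alpha_1,\alpha_2}\{x_1f\}$. From your own formula $w_1\mathcal{F}_{\alpha_1,\alpha_2}\{f\}=-i\sin\alpha_1 A-\cos\alpha_1 B$, the cross term is $(-i\sin\alpha_1A)\overline{(-\cos\alpha_1B)}+(-\cos\alpha_1B)\overline{(-i\sin\alpha_1A)}=+\sin\alpha_1\cos\alpha_1\bigl(iA\overline{B}+B\overline{iA}\bigr)$. This leads to $+2\sin\alpha_1\cos\alpha_1$, the opposite of the stated coefficient: the two minus signs cancel rather than ``carry through''. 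The source is the $k=1$ line of Lemma 3.4 as printed. Since $\partial_{x_1}K_{\alpha_1}=i(x_1\cot\alpha_1-w_1\csc\alpha_1)K_{\alpha_1}$, integration by parts gives $A=-i\cot\alpha_1B+iw_1\csc\alpha_1\mathcal{F}_{\alpha_1,\alpha_2}\{f\}$, which matches the structure of the printed $k=2$ line. Hence $w_1\mathcal{F}_{\alpha_1,\alpha_2}\{f\}=-i\sin\alpha_1A+\cos\alpha_1B$. This is what the paper's proof actually starts from, and it yields $-2\sin\alpha_1\cos\alpha_1$. Your proposed check at $\alpha_k=\pi/2$ cannot catch the error, because the cross term vanishes there whatever its sign.

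\textbf{The cyclicity step for $k=2$.} Your claim that cyclicity turns $Sc[\partial_{x_2}f\,j\,x_2\overline{f}]$ into $Sc[j\,\partial_{x_2}f\,x_2\overline{f}]$ is false. Cyclicity only gives $Sc[\partial_{x_2}f\,j\,\overline{f}]=Sc[j\,\overline{f}\,\partial_{x_2}f]$. Since $\overline{f}\,\partial_{x_2}f=\rho\,\partial_{x_2}\rho+\rho^2e^{-\textbf{u}\theta}\partial_{x_2}e^{\textbf{u}\theta}$, the right-sided kernel produces $x_2\rho^2Sc[j\,e^{-\textbf{u}\theta}(\partial_{x_2}e^{\textbf{u}\theta})]$, not $x_2\rho^2Sc[j(\partial_{x_2}e^{\textbf{u}\theta})e^{-\textbf{u}\theta}]$. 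These two pure quaternions are conjugate by $e^{\textbf{u}\theta}$, and their $j$-components differ in general. For example, take $f=\rho(\textbf{x})(i\cos x_2+k\sin x_2)$, so $\theta=\pi/2$. Then $(\partial_{x_2}e^{\textbf{u}\theta})e^{-\textbf{u}\theta}=-j$ but $e^{-\textbf{u}\theta}\partial_{x_2}e^{\textbf{u}\theta}=j$. Computing $\int|\cos\alpha_2x_2f-\sin\alpha_2\,\partial_{x_2}f\,j|^2d\textbf{x}$ directly shows that the left side differs from the displayed right side by $4\sin\alpha_2\cos\alpha_2\int x_2\rho^2\,d\textbf{x}$. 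This is nonzero, e.g.\ for a normalized Gaussian $\rho$ centred at $(0,1)$ with $\alpha_2=\pi/4$. So the $k=2$ identity cannot be obtained as displayed by any argument. What your method proves, once the sign is fixed, is the version with $Sc[j\,e^{-\textbf{u}\theta}\partial_{x_2}e^{\textbf{u}\theta}]$. The paper writes out only $k=1$ and calls $k=2$ ``similar'', so it has the same gap.
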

    	Proof. Without loss of generality, we assume \(k=1\).
    	By Lemma 3.4, we can get the first equality, the second equality is obtained by (2.1),
    	\[
   \begin{aligned}
    	& \int_{\mathbb{R}^{2}} w_{1}^{2}|\mathcal{F}_{\alpha_{1},\alpha_{2}}\{f\}(\textbf{w})|^{2} d \textbf{w} \\
    	= &\int_{\mathbb{R}^{2}}\left| -i\sin\alpha_{1}\mathcal{F}_{\alpha_{1},\alpha_{2}}\left\{\frac{\partial f}{\partial x_{1}}\right\}(\textbf{w}) + \cos\alpha_{1}\mathcal{F}_{\alpha_{1},\alpha_{2}}\left\{x_{1}f\right\}(\textbf{w}) \right|^{2} d\textbf{w} \\
    	= &\int_{\mathbb{R}^{2}}\left( -i\sin\alpha_{1}\mathcal{F}_{\alpha_{1},\alpha_{2}}\left\{\frac{\partial f}{\partial x_{1}}\right\}(\textbf{w}) + \cos\alpha_{1}\mathcal{F}_{\alpha_{1},\alpha_{2}}\left\{x_{1}f\right\}(\textbf{w}) \right) \\
    	& \times \overline{\left( -i\sin\alpha_{1}\mathcal{F}_{\alpha_{1},\alpha_{2}}\left\{\frac{\partial f}{\partial x_{1}}\right\}(\textbf{w}) + \cos\alpha_{1}\mathcal{F}_{\alpha_{1},\alpha_{2}}\left\{x_{1}f\right\}(\textbf{w}) \right)} d\textbf{w} .
    	\end{aligned}
    	\]
    Expanding the parentheses and using lemma 4.4 in the second equality, the above equation reduces to
    		\[
    	\begin{aligned}
    	& \int_{\mathbb{R}^{2}} w_{1}^{2}|\mathcal{F}_{\alpha_{1},\alpha_{2}}\{f\}(\textbf{w})|^{2} d \textbf{w} \\
    	 =&\sin^{2}\alpha_{1}\int_{\mathbb{R}^{2}}\left|\mathcal{F}_{\alpha_{1},\alpha_{2}}\left\{\frac{\partial f}{\partial x_{1}}\right\}(\textbf{w}) \right| ^{2}d\textbf{w}+\cos^{2}\alpha_{1}\int_{\mathbb{R}^{2}}\left| \mathcal{F}_{\alpha_{1},\alpha_{2}}\left\{x_{1}f\right\}(\textbf{w})\right| ^{2}d\textbf{w}\\
    &  -\sin\alpha_{1}\cos\alpha_{1}\int_{\mathbb{R}^{2}}i\mathcal{F}_{\alpha_{1},\alpha_{2}}\left\{\frac{\partial f}{\partial x_{1}}\right\}(\textbf{w}) \overline{\mathcal{F}_{\alpha_{1},\alpha_{2}}\left\{x_{1}f\right\}(\textbf{w})} d\textbf{w} \\
    &  -\sin\alpha_{1}\cos\alpha_{1}\int_{\mathbb{R}^{2}}\mathcal{F}_{\alpha_{1},\alpha_{2}}\left\lbrace x_{1}f\right\rbrace (\textbf{w}) \overline{i\mathcal{F}_{\alpha_{1},\alpha_{2}}\left\{\frac{\partial f}{\partial x_{1}}\right\}(\textbf{w})} d\textbf{w}\\
    =&\sin^{2}\alpha_{1}\int_{\mathbb{R}^{2}}\left|\mathcal{F}_{\alpha_{1},\alpha_{2}}\left\{\frac{\partial f}{\partial x_{1}}\right\}(\textbf{w}) \right| ^{2}d\textbf{w}+\cos^{2}\alpha_{1}\int_{\mathbb{R}^{2}}\left| \mathcal{F}_{\alpha_{1},\alpha_{2}}\left\{x_{1}f\right\}(\textbf{w})\right| ^{2}d\textbf{w}
    \end{aligned}
    \]
    \[
    \begin{aligned}
    &-2\sin\alpha_{1}\cos\alpha_{1}\int_{\mathbb{R}^{2}}Sc\left( i\mathcal{F}_{\alpha_{1},\alpha_{2}}\left\{\frac{\partial f}{\partial x_{1}}\right\}(\textbf{w}) \overline{\mathcal{F}_{\alpha_{1},\alpha_{2}}\left\{x_{1}f\right\}(\textbf{w})}\right)  d\textbf{w}.
     \end{aligned}
     \]
       Using  Lemma 3.3, we have
       \[
     \begin{aligned}
   & \int_{\mathbb{R}^{2}} w_{1}^{2}|\mathcal{F}_{\alpha_{1},\alpha_{2}}\{f\}(\textbf{w})|^{2} d\textbf{w} \\
   =& \sin^{2}\alpha_{1}\int_{\mathbb{R}^{2}}\left|\frac{\partial f}{\partial x_{1}}\right| ^{2} d\textbf{x}+\cos^{2}\alpha_{1}\int_{\mathbb{R}^{2}}\left|x_{1}f\right| ^{2} d\textbf{x} \\
  & -2\sin\alpha_{1}\cos\alpha_{1} \int_{\mathbb{R}^{2}}Sc\left(  i\frac{\partial f}{\partial x_{1}}x_{1}\overline{f}\right) d\textbf{x}.
  \end{aligned}
  \]
  In the following calculations, by Lemma 4.2, we obtain the first equality.
  Using Lemma 4.3, we then obtain the second equality.
  \[
  \begin{aligned}
   & \int_{\mathbb{R}^{2}} w_{1}^{2}|\mathcal{F}_{\alpha_{1},\alpha_{2}}\{f\}(\textbf{w})|^{2} d\textbf{w} \\
  = &\sin^{2}\alpha_{1}\int_{\mathbb{R}^{2}}\left(  \frac{\partial}{\partial x_{1}} \rho(\textbf{x}) \right) ^{2}\!\!d\textbf{x}\! +\sin^{2}\alpha_{1}\!\int_{\mathbb{R}^{2}}\rho^{2}(\textbf{x}) \left|  \left( \frac{\partial}{\partial x_{1}} e^{\textbf{u}(\textbf{x}) \theta(\textbf{x})} \right)\!\! \left( e^{-\textbf{u}(\textbf{x}) \theta(\textbf{x})} \right)  \right|^{2}\!\!d\textbf{x} \\
  & +\cos^{2}\alpha_{1}\int_{\mathbb{R}^{2}}x_{1}^{2}\rho^{2}(\textbf{x})d\textbf{x}-2\sin\alpha_{1}\cos\alpha_{1}\int_{\mathbb{R}^{2}}Sc\left(  i\frac{\partial f}{\partial x_{1}}x_{1}\overline{f}\right)   d\textbf{x} \\
  =& \sin^{2}\alpha_{1}\int_{\mathbb{R}^{2}}\left( \frac{\partial}{\partial x_{1}} \rho(\textbf{x}) \right)^{2}\!\!d\textbf{x} +\sin^{2}\alpha_{1}\!\!\int_{\mathbb{R}^{2}}\!\!\rho^{2}(\textbf{x}) \left|  \left( \frac{\partial}{\partial x_{1}} e^{\textbf{u}(\textbf{x}) \theta(\textbf{x})} \right) \left( e^{-\textbf{u}(\textbf{x}) \theta(\textbf{x})} \right)   \right|^{2}\!\!d\textbf{x} \\
  & +\!\cos^{2}\alpha_{1}\!\!\int_{\mathbb{R}^{2}}\!\!x_{1}^{2}\rho^{2}(\textbf{x})d\textbf{x}\!\!-\!\!2\sin\alpha_{1}\cos\alpha_{1}\!\!\int_{\mathbb{R}^{2}} \!\!x_{1}\rho^{2}(\textbf{x})Sc\!\!\left( \!  i \! \left(\!\! \frac{\partial}{\partial x_{1}} e^{\textbf{u}(\textbf{x}) \theta(\textbf{x})} \!\!\right)\!\! \left( e^{-\textbf{u}(\textbf{x}) \theta(\textbf{x})}\! \right)\!\right)  \!\! d\textbf{x}.
  \end{aligned}
  \]
    Using similar calculations, the conclusion for k = 2 can be obtained.

   \hfill $\square$
\begin{rem}
	If $\alpha_1 = \alpha_2 = \dfrac{\pi}{2}$, The above theorem reduces to the result in \cite{12}
	\[
	\begin{aligned}
	&\int_{\mathbb{R}^{2}} \xi_{k}^{2}|F\{f\}(\underline{\xi})|^{2} d \underline{\xi}\\=  &\int_{\mathbb{R}^{2}}\left(\frac{\partial}{\partial x_{k}} \rho(\textbf{x})\right)^{2} d \textbf{x} +\int_{\mathbb{R}^{2}} \rho^{2}(\textbf{x})\left|NSc\left[\left(\frac{\partial}{\partial x_{k}} e^{\textbf{u}(\textbf{x}) \theta(\textbf{x})}\right)\left(e^{-\textbf{u}(\textbf{x}) \theta(\textbf{x})}\right)\right]\right|^{2} d \textbf{x} .
	\end{aligned}
	\]
In the complex case we have (\cite{3})
	\[
	\sigma_{\omega}^{2}=\int_{-\infty}^{\infty}w^{2}\left|F\{f\}(w)\right| ^{2}dw=\int_{-\infty}^{\infty} \rho^{\prime 2}(x) d x+\int_{-\infty}^{\infty} \rho^{2}(x) \theta^{\prime 2}(x) dx.
	\]
\end{rem}
Thus, our results constitute a natural generalization of the FT.
\quad

In Theorem 4.7, we present a one-directional uncertainty principle with respect to the spatial coordinate \(x_k\) and its Fourier counterpart \(w_k\).
\begin{thm}(UP in directional case)
	Let the quaternion-valued function \(f(\textbf{x})=\rho (\textbf{x}) e^{\textbf{u}(\textbf{x}) \theta(\textbf{x})}\). If \(f(\textbf{x}) \in L^{2}(\mathbb{R}^{2}; \mathbb{H})\), \(x_{k} f(\textbf{x})\), \(\frac{\partial}{\partial x_{k}} f(\textbf{x}) \in L^{2}(\mathbb{R}^{2}; \mathbb{H})\)  and \(\|f\|_{L^{2}}=1\), let \(e_{1}=i,e_{2}=j\), then for every \(k=1,2\), we have
\begin{equation}
	\begin{aligned}
	&\int_{\mathbb{R}^{2}} x_{k}^{2}|f(\textbf{x})|^{2} d \textbf{x}\int_{\mathbb{R}^{2}} w_{k}^{2}|\mathcal{F}_{\alpha_{1},\alpha_{2}}\{f\}(\textbf{w})|^{2} d \textbf{w} \\
	\geq &\frac{1}{4} \sin^{2}\alpha_{k}+ \sin^{2}\alpha_{k}COV_{x_{k} w_{k}}^{2}+ \cos^{2}\alpha_{k}\left( \int_{R^{2}}x_{k}^{2}\rho(\textbf{x})^{2}d\textbf{x}\right) ^{2}\\
	&  -\!\!2\sin\!\alpha_{k}\!\cos\alpha_{k}\!\!\int_{\mathbb{R}^{2}} \!\!x_{k}^{2}\rho^{2}(\!\textbf{x}) d \textbf{x} \int_{\mathbb{R}^{2}} \!\!x_{k}\rho^{2}(\!\textbf{x})Sc\!\left[e_{k} \!\left(\! \frac{\partial}{\partial x_{k}} e^{\textbf{u}(\textbf{x}) \theta(\textbf{x})} \right) \!\!\left( e^{-\textbf{u}(\textbf{x}) \theta(\textbf{x})} \right) \right]  \!\!d\textbf{x},
	\end{aligned}
\end{equation}
	where the absolute covariance is defined by
	\[
	COV_{x_{k}w _{k}}:=\int _{\mathbb{R}^{2}}\left| x_{k}\left(\frac{\partial}{\partial x_{k}} e^{\textbf{u}(\textbf{x}) \theta(\textbf{x})} \right) \!\!\left( e^{-\textbf{u}(\textbf{x}) \theta(\textbf{x})}\right)  \right| \rho ^{2}(\textbf{x})d\textbf{x}.
	\]
	The equality in (4.1) holds if and only if
	\[
		f(\textbf{x}) = \frac{1}{\sqrt[4]{\pi ^{2}\lambda_{1}\lambda_{2}\left| \sin \alpha_{1}\right| \left| \sin \alpha_{2}\right| }}e^{-\frac{1}{2\lambda_{1}}\frac{1}{\left| \sin \alpha_{1}\right| }x_{1}^2 -\frac{1}{2\lambda_{2}}\frac{1}{\left| \sin \alpha_{2}\right| }x_{2}^2}e^{\textbf{u}(\textbf{x})\theta(\textbf{x})}
		\] and \( \left( \frac{\partial}{\partial x_k} e^{\textbf{u}(\textbf{x}) \theta(\textbf{x})} \right) e^{-\textbf{u}(\textbf{x}) \theta(\textbf{x})} = \beta_kx_k \). Here, \(\lambda_{1},\lambda_{2} > 0,\, \beta_1,\beta_2\) is pure quaternions.
	\end{thm}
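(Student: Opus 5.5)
Fix $k$ and write $A:=\int_{\mathbb{R}^2}x_k^2\rho^2\,d\textbf{x}=\int x_k^2|f|^2d\textbf{x}$ and $\textbf{v}_k(\textbf{x}):=\left(\frac{\partial}{\partial x_k}e^{\textbf{u}\theta}\right)e^{-\textbf{u}\theta}$, which is a pure quaternion by Lemma 4.1. The plan is to multiply the identity of Theorem 4.5 by $A$ and bound each term separately. The four resulting pieces are:
\[
A\sin^2\alpha_k\!\int(\partial_k\rho)^2,\quad A\sin^2\alpha_k\!\int\rho^2|\textbf{v}_k|^2,\quad \cos^2\alpha_k A^2,\quad -2\sin\alpha_k\cos\alpha_k\,A\!\int x_k\rho^2 Sc[e_k\textbf{v}_k].
\]
The last two are already exactly the last two terms on the right of (4.1). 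So it only remains to show
\[
A\int(\partial_k\rho)^2d\textbf{x}+A\int\rho^2|\textbf{v}_k|^2d\textbf{x}\ \ge\ \tfrac14+COV_{x_kw_k}^2,
\]
and then multiply by $\sin^2\alpha_k\ge0$.

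For the amplitude part I use the real Heisenberg argument. Integrating by parts, $\int x_k\rho\,\partial_k\rho\,d\textbf{x}=\frac12\int x_k\partial_k(\rho^2)d\textbf{x}=-\frac12\|f\|^2=-\frac12$. The boundary terms vanish because $x_kf$ and $\partial_kf$ lie in $L^2$; if needed, argue first on $S(\mathbb{R}^2;\mathbb{H})$ and extend by density. Cauchy–Schwarz then gives $A\int(\partial_k\rho)^2\ge\frac14$. For the phase part, Cauchy–Schwarz applied to $|x_k|\rho$ and $\rho|\textbf{v}_k|$ gives $A\int\rho^2|\textbf{v}_k|^2\ge\left(\int|x_k\textbf{v}_k|\rho^2\right)^2=COV_{x_kw_k}^2$. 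Adding the two bounds yields the displayed inequality, and hence (4.1).

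For the equality case, equality in the first Cauchy–Schwarz step forces $\partial_k\rho=-c_kx_k\rho$ with $c_k>0$. Equality in the second forces $\rho|\textbf{v}_k|$ to be proportional to $|x_k|\rho$, which matches the normalization $\textbf{v}_k=\beta_kx_k$ with $\beta_k$ a pure quaternion. Solving the first condition for both $k=1,2$ gives $\rho=Ce^{-c_1x_1^2/2-c_2x_2^2/2}$. Writing $c_k=1/(\lambda_k|\sin\alpha_k|)$ and imposing $\|f\|=1$ produces the stated normalizing constant $(\pi^2\lambda_1\lambda_2|\sin\alpha_1||\sin\alpha_2|)^{-1/4}$. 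Conversely, substituting this Gaussian amplitude together with $\textbf{v}_k=\beta_kx_k$ back into the identity of Theorem 4.5 should give equality in both Cauchy–Schwarz steps.

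The main obstacle is the cross term. I do not bound it; I carry it through unchanged, which is why it survives with the factor $A$ on the right. The delicate point is the equality statement when $\sin\alpha_k\cos\alpha_k\neq0$: equality is then controlled only through the two Cauchy–Schwarz steps, and one must check that the parametrization by $\lambda_k$ and $|\sin\alpha_k|$ is consistent. I also have to verify that the two-variable phase condition is compatible with $\textbf{v}_k$ being pure, with $\beta_k$ constant and defining a genuine phase.
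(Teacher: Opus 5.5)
Your inequality argument is the paper's own proof: multiply Theorem 4.5 by $A$, get $\tfrac14$ from $\int x_k\rho\,\partial_k\rho\,d\textbf{x}=-\tfrac12$ plus Cauchy--Schwarz, get $COV_{x_kw_k}^2$ from Cauchy--Schwarz on the phase part, and carry the $\cos^2\alpha_k$ and cross terms unchanged. For $k=1$ this is fine.

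For $k=2$, the step ``the last two pieces are exactly the last two terms of (4.1)'' fails, because the identity you import from Theorem 4.5 is not correct there; the paper only says ``similar calculations''. The $x_2$-kernel acts from the right. Lemma 3.4 gives $w_2\mathcal{F}_{\alpha_1,\alpha_2}\{f\}=-\sin\alpha_2\,\mathcal{F}_{\alpha_1,\alpha_2}\{\partial_{x_2}f\}\,j+\cos\alpha_2\,\mathcal{F}_{\alpha_1,\alpha_2}\{x_2f\}$. After Plancherel the cross term is therefore $-2\sin\alpha_2\cos\alpha_2\int x_2\rho^2\,Sc[j\,\overline{q}\,\partial_{x_2}q]\,d\textbf{x}$ with $q=e^{\textbf{u}(\textbf{x})\theta(\textbf{x})}$. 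The paper instead has $(\partial_{x_2}q)\overline{q}$, and the two differ by conjugation with $q$.

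This actually breaks (4.1). Take $\rho=\pi^{-1/2}e^{-|\textbf{x}|^2/2}$ and $f_a=\rho\,a\,e^{i\mu x_2^2/2}$ with $a$ a constant unit quaternion.
\begin{itemize}
\item Then $\mathcal{F}_{\alpha_1,\alpha_2}\{f_a\}(\textbf{w})=P(w_1)\,a\,Q(w_2)$, so the left side of (4.1) for $k=2$ does not depend on $a$.
\item For $a=1$ equality holds: both Cauchy--Schwarz steps are equalities, and both versions of the cross term vanish.
\item For $a=(1+ij)/\sqrt2$ one has $(\partial_{x_2}q)\overline{q}=\mu x_2\,j$. This raises the right side by $2\mu\sin\alpha_2\cos\alpha_2A^2=\tfrac{\mu}{2}\sin\alpha_2\cos\alpha_2$ (here $A=\tfrac12$).
\end{itemize}
So (4.1) fails, e.g., for $\alpha_2=\pi/4$ and $\mu>0$.

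The ``only if'' half of the equality statement also does not follow; it breaks at ``which matches the normalization $\textbf{v}_k=\beta_kx_k$''. Since $\sin\alpha_k\neq0$ and the other terms are carried exactly, equality in (4.1) is equivalent to equality in your two Cauchy--Schwarz steps. That means $\partial_k\rho=-c_kx_k\rho$ with $c_k>0$, and $|\textbf{v}_k|=\mu_k|x_k|$ on $\{\rho>0\}$.
\begin{itemize}
\item For a fixed $k$, the first condition only gives $\rho=g(x_{k'})e^{-c_kx_k^2/2}$ with $g$ arbitrary in the other variable $x_{k'}$. You cannot ``solve for both $k$'', since (4.1) concerns one $k$. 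For instance, the real function $f=Ne^{-x_1^2/2}(1+x_2^2)^{-1}$ gives equality for $k=1$.
\item The second condition fixes only the modulus of $\textbf{v}_k$, not its direction. Solve $\partial_{x_1}q=x_1(i\cos x_1+j\sin x_1)\,q$ with $q|_{x_1=0}=1$ and take a Gaussian $\rho$. This gives equality for $k=1$, yet $\textbf{v}_1\neq\beta_1x_1$ for any constant $\beta_1$.
\end{itemize}
The paper's proof makes the same two leaps: it treats the integration constant as a true constant, and it asserts a ``number $\beta_k$'' from Cauchy--Schwarz. What your argument does establish is the ``if'' direction, and the characterization of equality by the two conditions above.
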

Proof. By Theorem 4.5, we have
 \[
\begin{aligned}
&\int_{\mathbb{R}^{2}} x_{k}^{2}|f(\textbf{x})|^{2} d \textbf{x}\int_{\mathbb{R}^{2}} w_{k}^{2}|\mathcal{F}_{\alpha_{1},\alpha_{2}}\{f\}(\textbf{w})|^{2} d \textbf{w} \\
	 =&\int_{\mathbb{R}^{2}} x_{k}^{2}\rho^{2}(\textbf{x}) d \textbf{x} \int_{\mathbb{R}^{2}}\sin^{2}\alpha_{k}\left( \frac{\partial}{\partial x_{k}} \rho(\textbf{x})\right) ^{2} d \textbf{x}+\cos^{2}\alpha_{k}\left(\int_{\mathbb{R}^{2}} x_{k}^{2}\rho^{2}(\textbf{x}) d \textbf{x}\right)^{2} \\
	& +\sin^{2}\alpha_{k} \int_{\mathbb{R}^{2}} x_{k}^{2}\rho^{2}(\textbf{x}) d \textbf{x} \int_{\mathbb{R}^{2}} \rho^{2}(\textbf{x})\left|\left(\frac{\partial}{\partial x_{k}} e^{\textbf{u}(\textbf{x}) \theta(\textbf{x})}\right)\left(e^{-\textbf{u}(\textbf{x}) \theta(\textbf{x})}\right)\right|^{2} d \textbf{x}
	\end{aligned}
	\]
	\begin{equation}
	\begin{aligned}
	&  -\!\!2\sin\alpha_{k}\!\cos\alpha_{k}\!\int_{\mathbb{R}^{2}} \!\!x_{k}^{2}\rho^{2}(\textbf{x})d \textbf{x}\! \int_{\mathbb{R}^{2}}\!\! x_{k}\rho^{2}(\textbf{x})Sc\!\left[ e_{k} \!\!\left( \!\frac{\partial}{\partial x_{k}} e^{\textbf{u}(\textbf{x}) \theta(\textbf{x})} \right)\!\! \left( e^{-\textbf{u}(\textbf{x}) \theta(\textbf{x})} \right) \right]\!\! d\textbf{x}\!.
	\end{aligned}
	\end{equation}
For the first term in (4.2), we apply H\(\ddot{o}\)lder's inequality, then
\begin{equation}
\begin{aligned}
& \int_{\mathbb{R}^{2}} x_{k}^{2} \rho^{2}(\textbf{x}) d \textbf{x}  \int_{\mathbb{R}^{2}}\sin^{2}\alpha_{k} \left(  \frac{\partial}{\partial x_{k}} \rho(\textbf{x}) \right) ^{2} d \textbf{x} \\
\geq &\left( \int_{\mathbb{R}^{2}} \left| x_{k} \rho (\textbf{x})\sin\alpha_{k}\left(  \frac{\partial}{\partial x_{k}} \rho(\textbf{x}) \right) \right| d \textbf{x} \right)^{2}\\
\geq &\left| \int_{\mathbb{R}^{2}} x_{k} \rho(\textbf{x})\sin\alpha_{k} \left( \frac{\partial}{\partial x_{k}} \rho(\textbf{x}) \right)  d \textbf{x} \right|^{2}.
\end{aligned}
\end{equation}
 In the following expression, note that \(x_{1}\rho^{2}(\textbf{x})\left.\right|_{-\infty}^{+\infty} \) is zero and the second term equals 1 since we assume the signal is unit energy (i.e. \( \|f\|_{L^2} = 1 \)), we have
\[
\begin{aligned}
&\int_{\mathbb{R}^{2}} x_{1} \rho(\textbf{x}) \sin\alpha_{1}\left(  \frac{\partial}{\partial x_{1}} \rho(\textbf{x}) \right)  d \textbf{x}\\
=&\sin\alpha_{1} \int_{\mathbb{R}} \left( x_{1}\rho^{2}(\textbf{x})\left.\right|_{-\infty}^{+\infty}-\int_{\mathbb{R}}\rho^{2}(\textbf{x})dx_{1}-\int_{\mathbb{R}}x_{1}\rho(\textbf{x})\frac{\partial\rho(\textbf{x})}{\partial x_{1}}dx_{1}\right) dx_{2} \\
=&-\sin\alpha_{1}-\int_{\mathbb{R}^{2}} x_{1} \rho(\textbf{x}) \sin\alpha_{1}\left(  \frac{\partial}{\partial x_{1}} \rho(\textbf{x}) \right)  d \textbf{x}\\
=& -\frac{1}{2}\sin\alpha_{1}.
	\end{aligned}
\]
 Using similar calculations, we have
	\begin{equation}
	\int_{\mathbb{R}^{2}} x_{k} \rho(\textbf{x}) \sin\alpha_{k}\left(  \frac{\partial}{\partial x_{k}} \rho(\textbf{x}) \right)  d \textbf{x}= -\frac{1}{2}\sin\alpha_{k}.
	\end{equation}
	Substituting (4.4) into (4.3), we obtain the following equation.
	\begin{equation}
	 \int_{\mathbb{R}^{2}} x_{k}^{2} \rho^{2}(\textbf{x}) d \textbf{x}  \int_{\mathbb{R}^{2}} \sin^{2}\alpha_{k} \left(  \frac{\partial}{\partial x_{k}} \rho(\textbf{x}) \right) ^{2} d \textbf{x} \geq\frac{1}{4}\sin^{2}\alpha_{k}.
	\end{equation}
	Similarly, For the third term in (4.2), we have
\begin{equation}
	\begin{aligned}
	&  \int_{\mathbb{R}^{2}} x_{k}^{2} \rho^{2}(\textbf{x}) d \textbf{x}  \int_{\mathbb{R}^{2}} \rho^{2}(\textbf{x}) \left|  \left( \frac{\partial}{\partial x_{k}} e^{\textbf{u}(\textbf{x}) \theta(\textbf{x})} \right) \left( e^{-\textbf{u}(\textbf{x}) \theta(\textbf{x})} \right)  \right|^{2} d \textbf{x}  \\
	\geq& \left( \int_{\mathbb{R}^{2}} \left| x_{k} \left( \frac{\partial}{\partial x_{k}} e^{\textbf{u}(\textbf{x}) \theta(\textbf{x})} \right) \left( e^{-\textbf{u}(\textbf{x}) \theta(\textbf{x})} \right) \right| \rho^{2}(\textbf{x}) d \textbf{x} \right)^{2}= COV_{x_{k} w_{k}}^{2} .
	\end{aligned}
	\end{equation}
	Connecting (4.2), (4.5) and (4.6), the inequality (4.1) holds.
	
	Now, we describe the conditions under which equality holds in  (4.1). The first equality in (4.3) holds if and only if there exists a constant \( \lambda_{k}>0 \) such that
	\[
	|x_{k}\rho| = \lambda_{k} \left|\sin \alpha_{k}\frac{\partial}{\partial x_{k}}\rho \right|.
	\]
	 The second equality in (4.3) holds if and only if  \(x_{k} \) and \(\sin \alpha_{k}\frac{\partial}{\partial x_{k}}\rho\)  have the same sign or opposite signs.
	 If  \(x_{k} \) and \(\sin \alpha_{k}\frac{\partial}{\partial x_{k}}\rho\) possess the same sign, then \(x_{k}\rho = \lambda_{k} \left| \sin \alpha_{k}\right| \frac{\partial}{\partial x_{k}}\rho \), that is, \( \frac{\frac{\partial}{\partial x_{k}}\rho}{\rho} = \frac{1}{\lambda_{k}}\frac{1}{\left| \sin \alpha_{k}\right| } x_{k} \). Taking the indefinite integral of both sides, we have
	 \[
	 \rho(\textbf{x}) = e^{\frac{1}{2\lambda_{k}}\frac{1}{\left| \sin \alpha_{k}\right| }x_{k}^2 + c}.
	 \]
	 But it is  known the function \( \rho(\textbf{x}) = e^{\frac{1}{2\lambda_{k}}{\left| \sin \alpha_{k}\right| }x_{k}^2 + c}\) cannot be in  the space \(L^2(\mathbb{R}^{2}) \). Therefore, \(x_{k} \) and \({\sin \alpha_{k}}\frac{\partial}{\partial x_{k}}\rho\)  must have opposite signs. It means that  \( \frac{\frac{\partial}{\partial x_{k}}\rho}{\rho} = -\frac{1}{\lambda_{k}}\frac{1}{\left| \sin \alpha_{k}\right| }x_{k}\),  then we have
	 \[
	\rho(\textbf{x}) = e^{-\frac{1}{2\lambda_{k}}\frac{1}{\left| \sin \alpha_{k}\right| }x_{k}^2 + c}.
	 \]
	 Clearly, the equation holds in (4.6) if and only if there exists a number \(\beta_{k}  \) such that
	 \[
	 \beta_{k}x_{k}\rho(\textbf{x})=\rho(\textbf{x}) \left( \frac{\partial}{\partial x_{k}} e^{\textbf{u}(\textbf{x}) \theta(\textbf{x})} \right) \left( e^{-\textbf{u}(\textbf{x}) \theta(\textbf{x})} \right)
	 \]
By Lemma 4.1, Sc\(\left[\left( \frac{\partial}{\partial x_{k}} e^{\textbf{u}(\textbf{x}) \theta(\textbf{x})} \right) \left( e^{-\textbf{u}(\textbf{x}) \theta(\textbf{x})} \right) \right]=0\), then \(\beta_{k}\) must be a pure quaternion. We have
	 \[
	 \begin{aligned}
	\left( \frac{\partial}{\partial x_{k}} e^{\textbf{u}(\textbf{x}) \theta(\textbf{x})} \right) \left( e^{-\textbf{u}(\textbf{x}) \theta(\textbf{x})} \right)
	 & = \beta_{k} x_{k}.
	 \end{aligned}
	 \]
	\(\beta_{1},\beta_{2}\) is pure quaternions.
	
		Since signals we discuss are of unit energy, that is
	\[
	\begin{aligned}
	&\int_{\mathbb{R}^{2}}f(\textbf{x})^{2}d\textbf{x}=\int_{\mathbb{R}^{2}}\rho^{2} (\textbf{x})d\textbf{x}=1\\
	&=e^{2c}\int_{\mathbb{R}^{2}}e^{-\frac{1}{\lambda_{1}}\frac{1}{\left| \sin \alpha_{1}\right| }x_{1}^{2}-\frac{1}{\lambda_{2}}\frac{1}{\left| \sin \alpha_{2}\right| }x_{2}^{2} }d\textbf{x}\\
	&=e^{2c}\int_{\mathbb{R}}\int_{\mathbb{R}}e^{-\frac{1}{\lambda_{1}}\frac{1}{\left| \sin \alpha_{1}\right| }x_{1}^{2}}dx_{1}e^{-\frac{1}{\lambda_{2}}\frac{1}{\left| \sin \alpha_{2}\right| }x_{2}^{2}}dx_{2}\\
	&=e^{2c}\pi\lambda_{1}^{\frac{1}{2}}\lambda_{2}^{\frac{1}{2}}\left|  \sin\alpha_1\right| ^{\frac{1}{2}}\left| \sin\alpha_2\right| ^{\frac{1}{2}}.
	\end{aligned}
	\]
	We derive that \(\lambda\) and c should satisfy \(e^{4c}\pi^{2}\lambda_{1}\lambda_{2} \left| \sin\alpha_1\right| \left| \sin\alpha_2 \right| =1\).
	
	 This completes the proof.

\hfill $\square$
\begin{rem}
	If we work in the space we defined in Definition 2.1, we obtain a stronger result concerning \(\beta\). i.e. \(\beta_{1}\beta_{2}=\beta_{2}\beta_{1}\).
	\[
	\frac{\partial}{\partial x_{1}\partial x_{2}} e^{\textbf{u}(\textbf{x}) \theta(\textbf{x})}=\beta_{1}x_{1}\frac{\partial}{\partial x_{2}} e^{\textbf{u}(\textbf{x}) \theta(\textbf{x})},\frac{\partial}{\partial x_{2}\partial x_{1}} e^{\textbf{u}(\textbf{x}) \theta(\textbf{x})}=\beta_{2}x_{2}\frac{\partial}{\partial x_{1}} e^{\textbf{u}(\textbf{x}) \theta(\textbf{x})}.
	\]
	Since \(e^{\textbf{u}(\textbf{x}) \theta(\textbf{x})} \in S\left(\mathbb{R}^{2} ; \mathbb{H}\right)\) and Theroem 4.7 shows \(\frac{\partial}{\partial x_{k}} e^{\textbf{u}(\textbf{x}) \theta(\textbf{x})}=\beta_{k}x_{k}e^{\textbf{u}(\textbf{x}) \theta(\textbf{x})}\), we get \(\frac{\partial}{\partial x_{1}\partial x_{2}} e^{\textbf{u}(\textbf{x}) \theta(\textbf{x})}=\frac{\partial}{\partial x_{2}\partial x_{1}} e^{\textbf{u}(\textbf{x}) \theta(\textbf{x})}\), i.e.
	\[
	\beta_{1}x_{1}\frac{\partial}{\partial x_{2}} e^{\textbf{u}(\textbf{x}) \theta(\textbf{x})}=\beta_{2}x_{2}\frac{\partial}{\partial x_{1}} e^{\textbf{u}(\textbf{x}) \theta(\textbf{x})},
	\]
	\[
		\beta_{1}x_{1}\beta_{2}x_{2}e^{\textbf{u}(\textbf{x}) \theta(\textbf{x})}=\beta_{2}x_{2}\beta_{1}x_{1}e^{\textbf{u}(\textbf{x})\theta(\textbf{x})}.
	\]
	Finally we have \(\beta_{1}\beta_{2}=\beta_{2}\beta_{1}\).
	
\end{rem}

	 By introducing the absolute covariance term \(	COV_{x_{k}w _{k}}\), Theorem 4.7 presents a more refined spatial-domain UP. Moreover, since the standard covariance satisfies \(\left|Cov_{x_{k}w _{k}}\right|\leq COV_{x_{k}w _{k}} \), the inequality can be appropriately relaxed. Based on this relation, Theorem 4.7 directly leads to a more classical form of the UP, namely Corollary 4.9 below.
	 \begin{cor}
	 	Let \(f(\textbf{x})=|f(\textbf{x})| e^{\textbf{u}(\textbf{x}) \theta(\textbf{x})}\). If \(f(\textbf{x}) \in L^{2}(\mathbb{R}^{2}, \mathbb{H})\), \(x_{k} f(\textbf{x})\), \(\frac{\partial}{\partial x_{k}} f(\textbf{x}) \in L^{2}(\mathbb{R}^{2}, \mathbb{H})\)  and \(\|f\|_{L^{2}}=1\), then
	 	\[
	 	\begin{aligned}
	 	&\int_{\mathbb{R}^{2}} x_{k}^{2}|f(\textbf{x})|^{2} d \textbf{x}\int_{\mathbb{R}^{2}} w_{k}^{2}|\mathcal{F}_{\alpha_{1},\alpha_{2}}\{f\}(\textbf{w})|^{2} d \textbf{w} \\
	 	\geq& \frac{1}{4} \sin^{2}\alpha_{k}+ \sin^{2}\alpha_{k}\left| Cov_{x_{k} w_{k}}\right| ^{2}+ \cos^{2}\alpha_{k}\left( \int_{\mathbb{R}^{2}}x_{k}^{2}\rho^{2}(\textbf{x})d\textbf{x}\right) ^{2}\\
	 	&  -\!\!2\!\sin\alpha_{k}\!\cos\alpha_{k}\!\int_{\mathbb{R}^{2}}\!\! x_{k}^{2}\rho^{2}(\textbf{x}) d \textbf{x}\!\! \int_{\mathbb{R}^{2}}\!\! x_{k}\rho^{2}(\textbf{x})Sc \! \left[ e_{k}\!\left( \!\frac{\partial}{\partial x_{k}} e^{\textbf{u}(\textbf{x}) \theta(\textbf{x})} \right)\!\! \left( e^{-\textbf{u}(\textbf{x}) \theta(\textbf{x})} \right) \right]\!\! d\textbf{x},
	 	\end{aligned}
	 	\]
	 	where the absolute covariance is defined by
	 	\[
	 	Cov_{x_{k}w _{k}}:=\int _{\mathbb{R}^{2}} x_{k} \left(\frac{\partial}{\partial x_{k}} e^{\textbf{u}(\textbf{x}) \theta(\textbf{x})} \right) \!\!\left( e^{-\textbf{u}(\textbf{x}) \theta(\textbf{x})}\right)  \rho ^{2}(\textbf{x})d\textbf{x}.
	 	\]
	 \end{cor}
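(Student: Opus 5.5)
The corollary follows from Theorem 4.7 by relaxing the absolute covariance to the ordinary covariance. The plan is to prove the single inequality
\[
\left| Cov_{x_{k}w_{k}}\right| \le COV_{x_{k}w_{k}}
\]
and then substitute it into (4.1). Every term on the right-hand side of the corollary, other than the covariance term, is identical to the corresponding term in (4.1). So it suffices to show
\[
\sin^{2}\alpha_{k}\left| Cov_{x_{k}w_{k}}\right|^{2}\le \sin^{2}\alpha_{k}\,COV_{x_{k}w_{k}}^{2}.
\]

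First, observe that $Cov_{x_k w_k}$ is a quaternion-valued integral. By Lemma 4.1 the integrand is pure, since its scalar part vanishes, but this is not even needed. The modulus on $\mathbb{H}$ is a norm on $\mathbb{R}^4$, so the triangle inequality for Bochner/componentwise integrals gives
\[
\left|\int_{\mathbb{R}^2} g(\textbf{x})\,d\textbf{x}\right|\le \int_{\mathbb{R}^2}|g(\textbf{x})|\,d\textbf{x}
\]
for any integrable $\mathbb{H}$-valued $g$. One can justify this by writing $q=\int g$ and, if $q\neq 0$, noting
\[
|q|=Sc\!\left[\frac{\overline{q}}{|q|}\int g\right]=\int Sc\!\left[\frac{\overline{q}}{|q|}g\right]\le\int |g|.
\]

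Apply this with
\[
g=x_k\left(\frac{\partial}{\partial x_k}e^{\textbf{u}\theta}\right)e^{-\textbf{u}\theta}\rho^2 .
\]
Since $\rho^2\ge 0$ is real, it factors out of the modulus, and we obtain $|g|=\left|x_k\left(\frac{\partial}{\partial x_k}e^{\textbf{u}\theta}\right)e^{-\textbf{u}\theta}\right|\rho^2$, which is exactly the integrand of $COV_{x_k w_k}$.

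Integrability of $g$ comes from the hypotheses. By Lemma 4.2, $\rho\left|\left(\frac{\partial}{\partial x_k}e^{\textbf{u}\theta}\right)e^{-\textbf{u}\theta}\right|\le |\partial_k f|\in L^2$, and $x_k\rho=|x_kf|\in L^2$. Cauchy–Schwarz then gives $g\in L^1$, so both covariances are finite.

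Both sides of the comparison are nonnegative, so squaring preserves the inequality. Multiplying by $\sin^2\alpha_k\ge 0$ and inserting into (4.1) yields the stated bound.

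No step is a real obstacle. The only point needing care is the quaternionic triangle inequality for integrals, handled by the scalar-part trick above.
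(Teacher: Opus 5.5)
Your proposal is correct and follows the same route as the paper: it obtains Corollary 4.9 from Theorem 4.7 by invoking $\left|Cov_{x_kw_k}\right|\le COV_{x_kw_k}$ and relaxing the covariance term. Your scalar-part proof of the quaternionic triangle inequality and your Cauchy--Schwarz check that the integrand lies in $L^1$ supply details the paper leaves implicit.
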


 Now, we proceed to prove the main theorem.

\textbf{Proof of Theorem 1.1.} By Theorem 4.5, we have
 	\[
 	\begin{aligned}
 		&\int_{\mathbb{R}^{2}}\left| \textbf{x}\right| ^{2} |f(\textbf{x})|^{2} d \textbf{x}\int_{\mathbb{R}^{2}}\left| \textbf{w}\right| ^{2} |\mathcal{F}_{\alpha_{1},\alpha_{2}}\{f\}(\textbf{w})|^{2} d \textbf{w} \\
 		=& \int_{\mathbb{R}^{2}} \sum_{k=1}^{2} x_{k}^{2} \rho^{2} d\textbf{x}  \int_{\mathbb{R}^{2}} \sum_{k=1}^{2} w_{k}^{2} |\mathcal{F}_{\alpha_{1},\alpha_{2}}\{f\}(\textbf{w})|^{2} d\textbf{w} \\
 		=& \int_{\mathbb{R}^{2}} \sum_{k=1}^{2} x_{k}^{2} \rho^{2} d\textbf{x}  \int_{\mathbb{R}^{2}} \sum_{k=1}^{2}sin^{2} \alpha_k  \left( \frac{\partial}{\partial x_{k}} \rho(\textbf{x})\right)  ^{2} d\textbf{x} \\ &+\int_{\mathbb{R}^{2}} \sum_{k=1}^{2} x_{k}^{2} \rho^{2} d\textbf{x} \int_{\mathbb{R}^{2}} \sum_{k=1}^{2}\cos^{2}\alpha_{k} x_{k}^{2} \rho^{2} d\textbf{x} \\
 		&+ \int_{\mathbb{R}^{2}} \sum_{k=1}^{2} x_{k}^{2} \rho^{2} d\textbf{x}  \int_{\mathbb{R}^{2}} \sum_{k=1}^{2}sin^{2} \alpha_k \rho^{2}\left|  \left( \frac {\partial }{\partial x_{k}}e^{\underline {{u}}(\underline {{x}})\theta (\underline {{x}})}\right) e^{-\underline {{u}}(\underline {{x}})\theta (\underline {{x}})}\right| ^{2}d\textbf{x}
 		\end{aligned}
 		\]
 		\begin{equation}
 		\begin{aligned}
 		&-\!\!\int_{\mathbb{R}^{2}} \!\sum_{k=1}^{2} x_{k}^{2} \rho^{2} d\textbf{x}\! \int_{\mathbb{R}^{2}}\!\sum_{k=1}^{2}\!2\sin\alpha_{k}\cos\alpha_{k}\!x_{k}\rho^{2}Sc\! \!\left[ e_{k}\!\left( \frac{\partial}{\partial x_{k}} e^{\textbf{u}(\textbf{x}) \theta(\textbf{x})} \!\!\right) \!\!\left( e^{-\textbf{u}(\textbf{x}) \theta(\textbf{x})} \right) \!\right]\!\! d\textbf{x}.
 		\end{aligned}
 		\end{equation}
 		The first inequality applies the continuous case of the Cauchy-Schwarz inequality, while the second employs its discrete case, the last equality follows from (4.4), we get
 		
 		\[
 		\begin{aligned}
 		& \int_{\mathbb{R}^{2}} \sum_{k=1}^{2} x_{k}^{2} \rho^{2} d\textbf{x} \int_{\mathbb{R}^{2}} \sum_{k=1}^{2}sin^{2} \alpha_k  \left( \frac{\partial}{\partial x_{k}} \rho(\textbf{x})\right)  ^{2} d\textbf{x}\\
 		\geq& \left| \int_{\mathbb{R}^{2}}\left( \sum_{k=1}^{2} x_{k}^{2} \rho^{2}\right) ^{\frac{1}{2}}\left( \sum_{k=1}^{2}sin^{2} \alpha_k  \left( \frac{\partial}{\partial x_{k}} \rho(\textbf{x})\right)  ^{2}\right) ^{\frac{1}{2}}d\textbf{x}\right| ^{2}
 			\end{aligned}
 		\]
 		\begin{equation}
 		\geq\left| \int_{\mathbb{R}^{2}}\sum_{k=1}^{2}x_{k}\rho \sin\alpha_k\frac{\partial}{\partial x_{k}} \rho(\textbf{x})d\textbf{x}\right| ^{2}\\
 		=\frac{1}{4}\left| \sin\alpha_1+\sin\alpha_2\right| ^{2}.
 		\end{equation}
 	Similarly, we have
 	\begin{equation}
 	\begin{aligned}
 	& \int_{\mathbb{R}^{2}} \sum_{k=1}^{2} x_{k}^{2} \rho^{2} d\textbf{x} \int_{\mathbb{R}^{2}} \sum_{k=1}^{2}sin^{2} \alpha_k \rho^{2}\left|\left(  \frac{\partial}{\partial x_{k}} e^{\textbf{u}(\textbf{x}) \theta(\textbf{x})} \right) \!\!\left( e^{-\textbf{u}(\textbf{x}) \theta(\textbf{x})}\right) \right| ^{2}d\textbf{x} \\
 	\geq&\left( \int_{\mathbb{R}^{2}} \sum_{k=1}^{2}\left| x_{k}\rho ^{2}\sin\alpha_k  \left(  \frac{\partial}{\partial x_{k}} e^{\textbf{u}(\textbf{x}) \theta(\textbf{x})} \right) \!\!\left( e^{-\textbf{u}(\textbf{x}) \theta(\textbf{x})}\right)\right| d\textbf{x}\right) ^{2}.
 	\end{aligned}
 	\end{equation}
 	
 	Connecting (4.7), (4.8) and (4.9), the inequality (1.5) holds.
 	
 	Similarly, as in Theorem 4.7, the equality (1.5) holds if and only if there exists a positive number \( \lambda,\beta  \) such that
 	\[
 	\left| x_{k}\rho \right| =\lambda\left| \sin \alpha_{k}\frac {\partial\rho }{\partial x_{k}}\right|,\,\beta x_{k}\rho(\textbf{x})=\rho(\textbf{x}) \left( \frac{\partial}{\partial x_{k}} e^{\textbf{u}(\textbf{x}) \theta(\textbf{x})} \right) \left( e^{-\textbf{u}(\textbf{x}) \theta(\textbf{x})} \right), \]
 	we have \(\rho(\textbf{x}) = e^{-\frac{1}{2\lambda}\left( \frac{1}{\left| \sin\alpha_1\right| }x_{1}^2 +\frac{1}{\left| \sin\alpha_2\right| }x_{2}^2\right) + c}\) and \( \left( \frac{\partial}{\partial x_k} e^{\textbf{u}(\textbf{x}) \theta(\textbf{x})} \right) e^{-\textbf{u}(\textbf{x}) \theta(\textbf{x})} = \beta x_k \). Here, \(\lambda >0\) and \(\beta\) is pure quaternions.
 	
 	Since signals we discuss are of unit energy, that is
 	\[
 	\begin{aligned}
 	&\int_{\mathbb{R}^{2}}f(\textbf{x})^{2}d\textbf{x}=\int_{\mathbb{R}^{2}}\rho (\textbf{x})^{2}d\textbf{x}=1\\
 	=&e^{2c}\int_{\mathbb{R}^{2}}e^{-\frac{1}{\lambda}\frac{1}{\left| \sin \alpha_{1}\right| }x_{1}^{2}-\frac{1}{\lambda}\frac{1}{\left| \sin \alpha_{2}\right| }x_{2}^{2} }d\textbf{x}=e^{2c}\int_{\mathbb{R}}\int_{\mathbb{R}}e^{-\frac{1}{\lambda}\frac{1}{\left| \sin \alpha_{1}\right| }x_{1}^{2}}dx_{1}e^{-\frac{1}{\lambda}\frac{1}{\left| \sin \alpha_{2}\right| }x_{2}^{2}}dx_{2}\\
 	=&e^{2c}\pi\lambda \left| \sin\alpha_1\right| ^{\frac{1}{2} }\left| \sin\alpha_2\right| ^{\frac{1}{2} },
 	\end{aligned}
 	\]
 	 we derive that \(\lambda\) and c should satisfy \(e^{2c}\pi\lambda\left| \sin\alpha_1\right| ^{\frac{1}{2} }\left| \sin\alpha_2\right| ^{\frac{1}{2} } =1\).
 	
 	Now we prove the final condition for the equality to hold. We consider a quaternionic valued signal
 	\[
 	f(x)=e^{-\frac{1}{2\lambda}\left( \frac{1}{\left| \sin\alpha_1\right| }x_{1}^2 +\frac{1}{\left| \sin\alpha_2\right| }x_{2}^2\right) + c}e^{\textbf{u}(\textbf{x})\theta(\textbf{x})},
 	\]
 	where \( \left( \frac{\partial}{\partial x_k} e^{\textbf{u}(\textbf{x}) \theta(\textbf{x})} \right) e^{-\textbf{u}(\textbf{x}) \theta(\textbf{x})} = \beta x_k \), \(\lambda>0 \), $\alpha_1 ,\alpha _2 \neq n\pi $ and \(\beta\)  is pure quaternions.  Without loss of generality, we consider the case where \(\sin\alpha_{1}>0, \sin\alpha_{2}> 0\), the other cases can be obtained similarly by taking the absolute value.
 	
 	Computing directly, we have
 	\begin{equation}
 	\begin{aligned}
 	\int_{\mathbb{R}^{2}}\left| \textbf{x}\right| ^{2} |f(\textbf{x})|^{2} d \textbf{x}
 	&=\int_{\mathbb{R}^{2}}e^{-\frac{1}{\lambda}\left( \frac{1}{\sin\alpha_1}x_{1}^2 +\frac{1}{\sin\alpha_2}x_{2}^2\right) + 2c}\left( x_{1}^{2}+x_{2}^{2} \right) d\textbf{x}\\
 	&=\frac{\pi}{2}\lambda^{2}e^{2c}\sin\alpha_1^{\frac{1}{2}}\sin\alpha_2^{\frac{1}{2}}\left(\sin\alpha_1+\sin\alpha_2\right).
 		\end{aligned}
 	\end{equation}
 	Since \(\rho (\textbf{x})=e^{-\frac{1}{2\lambda}\left( \frac{1}{\sin\alpha_1}x_{1}^2 +\frac{1}{\sin\alpha_2}x_{2}^2\right) + c}, \left( \frac{\partial}{\partial x_k} e^{\textbf{u}(\textbf{x}) \theta(\textbf{x})} \right) e^{-\textbf{u}(\textbf{x}) \theta(\textbf{x})} = \beta x_k\), and by Theorem 4.5, we have
 	 	\begin{equation}
 	\begin{aligned}
 	&\int_{\mathbb{R}^{2}}\left| \textbf{w}\right| ^{2} |\mathcal{F}_{\alpha_{1},\alpha_{2}}\{f\}(\textbf{w})|^{2} d \textbf{w}\\
 	=& \int_{\mathbb{R}^{2}} \sum_{k=1}^{2}sin^{2} \alpha_k  \left( \frac{\partial}{\partial x_{k}} \rho(\textbf{x})\right)  ^{2} d\textbf{x}+\int_{\mathbb{R}^{2}} \sum_{k=1}^{2}\cos^{2}\alpha_{k} x_{k}^{2} \rho^{2} d\textbf{x}  \\
 	& + \int_{\mathbb{R}^{2}} \sum_{k=1}^{2}sin^{2} \alpha_k \rho^{2}\left| \left(  \frac{\partial}{\partial x_{k}} e^{\textbf{u}(\textbf{x}) \theta(\textbf{x})} \right) \!\!\left( e^{-\textbf{u}(\textbf{x}) \theta(\textbf{x})}\right)\right| ^{2}d\textbf{x}\\
 	& -\int_{\mathbb{R}^{2}}\sum_{k=1}^{2}2\sin\alpha_{k}\cos\alpha_{k}x_{k}\rho^{2}(\textbf{x})Sc \left[ e_{k}\left( \frac{\partial}{\partial x_{k}} e^{\textbf{u}(\textbf{x}) \theta(\textbf{x})} \right) \left( e^{-\textbf{u}(\textbf{x}) \theta(\textbf{x})} \right) \right] d\textbf{x}\\
 	=&\frac{\pi}{2}e^{2c}\sin\alpha_1^{\frac{1}{2}}\sin\alpha_2^{\frac{1}{2}}\left(\sin\alpha_1+\sin\alpha_2\right)\\ &+\frac{\pi}{2}e^{2c}\lambda^{2}\sin\alpha_1^{\frac{1}{2}}\sin\alpha_2^{\frac{1}{2}}\left(\cos ^{2}\alpha_{1}\sin\alpha_1+\cos ^{2}\alpha_{2}\sin\alpha_2\right) \\
 	&+ \frac{\pi}{2}\lambda^{2}e^{2c}\sin\alpha_1^{\frac{1}{2}}\sin\alpha_1^{\frac{1}{2}}\left| \beta\right| ^{2} \left(\sin^{3}\alpha_1+\sin^{3}\alpha_2\right)  \\
 	&-\pi\lambda^{2}e^{2c}\sin\alpha_1^{\frac{1}{2}}\sin\alpha_1^{\frac{1}{2}}\left( \sin^{2}\alpha_{1}\cos\alpha_{1}Sc \left[ e_{1}\beta \right]+\sin^{2}\alpha_{2}\cos\alpha_{2}Sc \left[ e_{2}\beta \right]\right).
    \end{aligned}
 	 	\end{equation}
 	Using (4.10), (4.11) and \(e^{2c}\pi\lambda\sin\alpha_1^{\frac{1}{2} }\sin\alpha_2^{\frac{1}{2} } =1\),  we have
 		\[
 	\begin{aligned}
 	&\int_{\mathbb{R}^{2}}\left| \textbf{x}\right| ^{2} |f(\textbf{x})|^{2} d \textbf{x}\int_{\mathbb{R}^{2}}\left| \textbf{w}\right| ^{2} |\mathcal{F}_{\alpha_{1},\alpha_{2}}\{f\}(\textbf{w})|^{2} d \textbf{w} \\
 	=&\frac {1}{4}\left( \sin\alpha_1+\sin\alpha_2\right) ^{2} +\frac {1}{4}\lambda^{2}\left( \sin\alpha_1+\sin\alpha_2\right)\left( \cos^{2} \alpha_{1}\sin\alpha_1+\cos^{2} \alpha_{2}\sin\alpha_2\right)\\
 	&+\frac {1}{4}\lambda^{2}\left| \beta\right| ^{2}\left( \sin^{4}\alpha_1+ sin\alpha_1sin^{3}\alpha_2+sin^{3}\alpha_1sin\alpha_2+\sin^{4}\alpha_2\right)
 	\end{aligned}
 	\]
 	\begin{equation}
 	\begin{aligned}
 	&-\frac {1}{2}\lambda^{2}\left( \sin\alpha_1+\sin\alpha_2\right) \left( \sin^{2}\alpha_{1}\cos\alpha_{1}Sc \left[ e_{1}\beta \right]+\sin^{2}\alpha_{2}\cos\alpha_{2}Sc \left[ e_{2}\beta \right]\right).
 	\end{aligned}
 	\end{equation}
 	Thus, we have computed the left-hand side of (1.5). Next, we proceed to prove the right-hand side.
 	It is easy to know that
 	\[
 	\begin{aligned}
 	 COV_{\textbf{x}\textbf{w}}&=\sum_{k=1}^{2}\int _{\mathbb{R}^{2}}\left| x_{k}\sin\alpha_{k} \left(  \frac{\partial}{\partial x_{k}} e^{\textbf{u}(\textbf{x}) \theta(\textbf{x})} \right) \!\!\left( e^{-\textbf{u}(\textbf{x}) \theta(\textbf{x})}\right) \right| \rho ^{2}(\textbf{x})d\textbf{x}\\
 	 &=\frac{\pi}{2}\lambda^{2} e^{2c}\sin\alpha_1^{\frac{1}{2}}\sin\alpha_1^{\frac{1}{2}}\left| \beta\right| \left( \sin^{2}\alpha_1+ \sin^{2}\alpha_2\right),
 	  	\end{aligned}
 	 \]
 	  then
 	\begin{equation}
 	\begin{aligned}
 	&COV_{\textbf{x}\textbf{w}}^{2}\\
 	=&e^{4c}\frac {\pi ^{2}}{4}\lambda^{4}\sin \alpha_1\sin \alpha_2\left| \underline{\beta}\right|^{2}\left(\sin ^{4}\alpha_1+  2\sin^{2}\alpha_1\sin^{2}\alpha_2+sin^{4}\alpha_2\right)\\
 	=&\frac {1}{4}\lambda^{2}\left| \beta\right|^{2}\left(\sin ^{4}\alpha_1+  2\sin^{2}\alpha_1\sin^{2}\alpha_2+sin^{4}\alpha_2\right).
 	 	\end{aligned}
 	\end{equation}
 By direct computation, we obtain the third and fourth terms on the right-hand side of Theorem 1.1.
 	\[
 	\begin{aligned}	
 	&\int_{\mathbb{R}^{2}} \sum_{k=1}^{2} x_{k}^{2} \rho^{2} d\textbf{x} \int_{\mathbb{R}^{2}} \sum_{k=1}^{2}\cos^{2}\alpha_{k} x_{k}^{2} \rho^{2} d\textbf{x} \\
 	=&\frac {1}{4}\lambda^{2}\left( \sin\alpha_1+\sin\alpha_2\right)\left( \cos^{2} \alpha_{1}\sin\alpha_1+\cos^{2} \alpha_{2}\sin\alpha_2\right),\\
 	&\!\!\int_{\mathbb{R}^{2}} \!\!\sum_{k=1}^{2} x_{k}^{2} \rho^{2} d\textbf{x}\!\!  \int_{\mathbb{R}^{2}}\!\sum_{k=1}^{2}2\sin\alpha_{k}\cos\alpha_{k}x_{k}\rho^{2}(\textbf{x})Sc\!\! \left[ \!e_{k}\!\left( \frac{\partial}{\partial x_{k}} e^{\textbf{u}(\textbf{x}) \theta(\textbf{x})} \!\!\right)\!\! \left(\! e^{-\textbf{u}(\textbf{x}) \theta(\textbf{x})} \right)\!\right]\!\! d\textbf{x}. \\
 	=&\frac {1}{2}\lambda^{2}\left( \sin\alpha_1+\sin\alpha_2\right) \left( \sin^{2}\alpha_{1}\cos\alpha_{1}Sc \left[ e_{1}\underline{\beta} \right]+\sin^{2}\alpha_{2}\cos\alpha_{2}Sc \left[ e_{2}\underline{\beta} \right]\right).
 	\end{aligned}
 	\]
 	Thus, we have computed every term on the right-hand side of (1.5).
 By comparing the left-hand side and the right-hand side of (1.5), to ensure that equality holds, we require the following condition:
 	\[
 	COV_{\textbf{x}\textbf{w}}^{2}=\frac {1}{4}\lambda^{2}\left| \underline{\beta}\right| ^{2}\left( \sin^{4}\alpha_1+ sin\alpha_1sin^{3}\alpha_2+sin^{3}\alpha_1sin\alpha_2+\sin^{4}\alpha_2\right).
 	\]
 From (4.12), we get
 	\[
 	\begin{aligned}
 	&\frac {1}{4}\lambda^{2}\left| \underline{\beta}\right|^{2}\left(\sin ^{4}\alpha_1+  2\sin^{2}\alpha_1\sin^{2}\alpha_2+sin^{4}\alpha_2\right)\\
 	=&\frac {1}{4}\lambda^{2}\left| \underline{\beta}\right| ^{2}\left( \sin^{4}\alpha_1+ sin\alpha_1sin^{3}\alpha_2+sin^{3}\alpha_1sin\alpha_2+\sin^{4}\alpha_2\right).
 	\end{aligned}
 	\]
 	That is \(\sin \alpha_1 =\sin \alpha_2\).
 	This completes the proof.
 	
 	\hfill $\square$

 	Moreover, since the standard covariance satisfies \(\left|Cov_{x_{k}w _{k}}\right|\leq COV_{x_{k}w _{k}} \), the inequality can be appropriately relaxed. Based on this relation, we have Corollary 4.10 below.
\begin{cor}
	Let \(f(\textbf{x})=\rho (\textbf{x}) e^{\textbf{u}(\textbf{x}) \theta(\textbf{x})}\). If \(f(\textbf{x}) \in L^{2}(\mathbb{R}, \mathbb{H})\), \(x_{k} f(\textbf{x})\), \(\frac{\partial}{\partial x_{k}} f(\textbf{x}) \in L^{2}(\mathbb{R}, \mathbb{H})\)  and \(\|f\|_{L^{2}}=1\), let \(e_{1}=i; e_{2}=j\), then
	\begin{equation}
	\begin{aligned}
	&\int_{\mathbb{R}^{2}}\left| \textbf{x}\right| ^{2} |f(\textbf{x})|^{2} d \textbf{x}\int_{\mathbb{R}^{2}}\left| \textbf{w}\right| ^{2} |\mathcal{F}_{\alpha_{1},\alpha_{2}}\{f\}(\textbf{w})|^{2} d \textbf{w} \\
	\geq& \frac{1}{4} \left| \sin\alpha_{1}+\sin\alpha_{2}\right| ^{2}+ \left| Cov_{\textbf{x}\textbf{w},\alpha}\right| ^{2}+\int_{\mathbb{R}^{2}} \sum_{k=1}^{2} x_{k}^{2} \rho^{2} d\textbf{x} \int_{\mathbb{R}^{2}} \sum_{k=1}^{2}\cos^{2}\alpha_{k} x_{k}^{2} \rho^{2} d\textbf{x}\\
	& -\!\!\int_{\mathbb{R}^{2}} \!\!\sum_{k=1}^{2} x_{k}^{2} \rho^{2} d\textbf{x} \! \int_{\mathbb{R}^{2}}\!\!\sum_{k=1}^{2}2\sin\alpha_{k}\cos\alpha_{k}x_{k}\rho^{2}Sc \left[ e_{k}\!\left( \!\frac{\partial}{\partial x_{k}} e^{\textbf{u}(\textbf{x}) \theta(\textbf{x})} \!\!\right)\!\! \left( e^{-\textbf{u}(\textbf{x}) \theta(\textbf{x})} \right)\! \right]\! d\textbf{x},
	\end{aligned}
	\end{equation}
	where the absolute covariance is defined by
	\[
	Cov_{\textbf{x}\textbf{w},\alpha}:=\sum_{k=1}^{2}\int _{\mathbb{R}^{2}} x_{k}\sin\alpha_{k}\left( \frac{\partial}{\partial x_{k}} e^{\textbf{u}(\textbf{x}) \theta(\textbf{x})} \right) \left( e^{-\textbf{u}(\textbf{x}) \theta(\textbf{x})} \right)  \rho ^{2}(\textbf{x})d\textbf{x}.
	\]
\end{cor}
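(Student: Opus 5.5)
We derive Corollary 4.10 directly from Theorem 1.1 by comparing the covariance terms. We do not return to the spatial and transform-domain computations. Under the hypotheses of the corollary, Theorem 1.1 gives the bound (1.5). Its right-hand side is the same as that of (4.14), except that $COV_{\textbf{x}\textbf{w},\alpha}^{2}$ appears where (4.14) has $\left|Cov_{\textbf{x}\textbf{w},\alpha}\right|^{2}$. All other terms are identical: the $\frac14|\sin\alpha_1+\sin\alpha_2|^2$ term, the product with $\cos^2\alpha_k$, and the subtracted $Sc[e_k(\cdot)]$ product. It therefore suffices to prove
\[
\left|Cov_{\textbf{x}\textbf{w},\alpha}\right|\le COV_{\textbf{x}\textbf{w},\alpha}.
\]
Both sides are nonnegative, so the inequality survives squaring.

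The comparison is a triangle-inequality step for quaternion-valued integrals. Write $q_k(\textbf{x})=\left(\frac{\partial}{\partial x_k}e^{\textbf{u}(\textbf{x})\theta(\textbf{x})}\right)e^{-\textbf{u}(\textbf{x})\theta(\textbf{x})}$. By Lemma 4.1 this is a pure quaternion. Since $x_k$, $\sin\alpha_k$ and $\rho^2$ are real, the modulus is multiplicative on them, and we get
\[
\left|Cov_{\textbf{x}\textbf{w},\alpha}\right|
=\left|\sum_{k=1}^{2}\int_{\mathbb{R}^2}x_k\sin\alpha_k\,q_k\,\rho^2\,d\textbf{x}\right|
\le\sum_{k=1}^{2}\int_{\mathbb{R}^2}\left|x_k\sin\alpha_k\,q_k\right|\rho^2\,d\textbf{x}.
\]
Here we use the discrete triangle inequality in $\mathbb{H}\cong\mathbb{R}^4$ for the sum. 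For the integral we use $\left|\int F\right|\le\int|F|$, which holds for $\mathbb{R}^4$-valued integrable $F$ (pair the integral with the unit vector in its direction and apply Cauchy--Schwarz pointwise). The last expression equals $\sum_k|\sin\alpha_k|\int|x_kq_k|\rho^2\,d\textbf{x}=\sum_k|\sin\alpha_k|COV_{x_kw_k}=COV_{\textbf{x}\textbf{w},\alpha}$, using the definition in Theorem 4.7.

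The only point needing care is integrability, which makes $Cov_{\textbf{x}\textbf{w},\alpha}$ well defined. We obtain it from Cauchy--Schwarz:
\[
\int|x_k q_k|\rho^2\le\|x_k\rho\|_{L^2}\,\|\rho q_k\|_{L^2}.
\]
The first factor is finite because $x_kf\in L^2$. The second is finite by Lemma 4.2, since $\rho|q_k|\le|\partial_k f|$ and $\partial_k f\in L^2$. Substituting the comparison into (1.5) and noting that the remaining terms coincide completes the proof. There is no real obstacle; the corollary is a relaxation of Theorem 1.1.
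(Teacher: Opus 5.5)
Your argument is correct and follows the paper's route: the paper obtains this corollary from Theorem 1.1 by noting that the ordinary covariance is dominated by the absolute one, $|Cov|\le COV$, so $COV_{\textbf{x}\textbf{w},\alpha}^{2}$ can be replaced by $|Cov_{\textbf{x}\textbf{w},\alpha}|^{2}$. You make that step explicit, with the triangle inequality over $k$ and under the integral and the integrability check via Lemma 4.2, which the paper leaves implicit.
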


\textbf{ Proof of Theorem 1.2.}
By Theorem 4.4, we have
	\[
\begin{aligned}
&\int_{\mathbb{R}^{2}}\left| \textbf{w}\right| ^{2} |\mathcal{F}_{\alpha_{1},\alpha_{2}}\{f\}(\textbf{w})|^{2} d \textbf{w}\int_{\mathbb{R}^{2}}\left| \textbf{y}\right| ^{2} |\mathcal{F}_{\beta_{1},\beta_{2}}\{f\}(\textbf{y})|^{2} d \textbf{y} \\
=& \int_{\mathbb{R}^{2}} \sum_{k=1}^{2} w_{k}^{2} |\mathcal{F}_{\alpha_{1},\alpha_{2}}\{f\}(\textbf{w})|^{2} d\textbf{w}   \int_{\mathbb{R}^{2}} \sum_{k=1}^{2} y_{k}^{2} |\mathcal{F}_{\beta_{1},\beta_{2}}\{f\}(\textbf{y})|^{2} d\textbf{y} \\
=&\left( I_{1}+I_{2}+I_{3}+I_{4}\right) \left( I_{5}+I_{6}+I_{7}+I_{8}\right) \\
 =&I_{1}I_{5}+I_{1}I_{6}+I_{1}I_{7}+I_{1}I_{8}+I_{2}I_{5}+I_{2}I_{6}+I_{2}I_{7}+I_{2}I_{8}\\
 &+I_{3}I_{5}+I_{3}I_{6}+I_{3}I_{7}+I_{3}I_{8}+I_{4}I_{5}+I_{4}I_{6}+I_{4}I_{7}+I_{4}I_{8}.
 \end{aligned}
 \]
 where \(I_{1},I_{2}\) etc. are given in Theorem 1.2.
 	 Applying the continuous case of the Cauchy-Schwarz inequality and its discrete case, we obtain these inequalities.
 	
 	 \[
 	 \begin{aligned}
 	 I_{1}I_{5}
 	 &= \int_{\mathbb{R}^{2}} \sum_{k=1}^{2}sin^{2} \alpha_k  \left( \frac{\partial}{\partial x_{k}} \rho(\textbf{x})\right)  ^{2} d\textbf{x}  \int_{\mathbb{R}^{2}} \sum_{k=1}^{2}sin^{2} \beta_k  \left( \frac{\partial}{\partial x_{k}} \rho(\textbf{x})\right)  ^{2} d\textbf{x}  \\
 	 &\geq \left| \int_{\mathbb{R}^{2}}  \sum_{k=1}^{2}sin \alpha_ksin \beta_k  \left( \frac{\partial}{\partial x_{k}} \rho(\textbf{x})\right)  ^{2}d\textbf{x}\right| ^{2}.
 	 \end{aligned}
 	 	 \]
 	By (4.4), we have
 	 	  \[
 	 	 \begin{aligned}
 	 	 I_{1}I_{6}
 	 	 &= \int_{\mathbb{R}^{2}} \sum_{k=1}^{2}sin^{2} \alpha_k  \left( \frac{\partial}{\partial x_{k}} \rho(\textbf{x})\right)  ^{2} d\textbf{x}  \int_{\mathbb{R}^{2}} \sum_{k=1}^{2}\cos^{2}\beta_{k} x_{k}^{2} \rho^{2} d\textbf{x}\\
 	 	 &\geq\frac{1}{4}\left| \sum_{k=1}^{2}sin \alpha_kcos \beta_k\right| ^{2}
 	 	 \end{aligned}
 	 	 \]
 	 	 and
 	 	  \[
 	 	 \begin{aligned}
 	 	 I_{1}I_{7}
 	 	 &= \!\!\int_{\mathbb{R}^{2}} \!\!\sum_{k=1}^{2}sin^{2} \alpha_k\!  \left( \!\frac{\partial}{\partial x_{k}} \rho(\textbf{x})\!\right)  ^{2}\!\! d\textbf{x}  \!\!\int_{\mathbb{R}^{2}}\!\! \sum_{k=1}^{2}sin^{2} \beta_k \rho^{2}\!\left|\left( \frac{\partial}{\partial x_{k}} e^{\textbf{u}(\textbf{x}) \theta(\textbf{x})} \!\!\right)\!\! \left( e^{-\textbf{u}(\textbf{x}) \theta(\textbf{x})} \right)\right| ^{2}\!\!d\textbf{x}\\
 	 	 &\geq\left| \int_{\mathbb{R}^{2}}  \sum_{k=1}^{2}sin \alpha_ksin \beta_k\rho(\textbf{x})  \left( \frac{\partial}{\partial x_{k}} \rho(\textbf{x})\right)  \left| \left( \frac{\partial}{\partial x_{k}} e^{\textbf{u}(\textbf{x}) \theta(\textbf{x})} \right) \left( e^{-\textbf{u}(\textbf{x}) \theta(\textbf{x})} \right)\right|d\textbf{x}\right| ^{2}.
 	 	 \end{aligned}
 	 	 \]
 	Using the same method as above, we can obtain the remaining terms as follows.
 	\[
 	\begin{aligned}
 	I_{2}I_{5}
 	&= \int_{\mathbb{R}^{2}} \sum_{k=1}^{2}\cos^{2}\alpha_{k} x_{k}^{2} \rho^{2} d\textbf{x} \int_{\mathbb{R}^{2}} \sum_{k=1}^{2}sin^{2} \beta_k  \left( \frac{\partial}{\partial x_{k}} \rho(\textbf{x})\right)  ^{2} d\textbf{x}  \\
  &\geq \frac{1}{4}\left| \sum_{k=1}^{2}cos \alpha_ksin \beta_k\right| ^{2}.
 	\end{aligned}
 	\]
 	And
 	\[
 	\begin{aligned}
 	&I_{2}I_{6}
 	\geq \left| \int_{\mathbb{R}^{2}}  \sum_{k=1}^{2}cos \alpha_kcos \beta_k   x_{k}^{2}\rho^{2}(\textbf{x})d\textbf{x}\right| ^{2},\\
 &I_{2}I_{7}
  \geq \left| \int_{\mathbb{R}^{2}}  \sum_{k=1}^{2}cos \alpha_ksin \beta_kx_{k}\rho^{2}(\textbf{x}) \left|\left( \frac{\partial}{\partial x_{k}} e^{\textbf{u}(\textbf{x}) \theta(\textbf{x})} \right) \left( e^{-\textbf{u}(\textbf{x}) \theta(\textbf{x})} \right)\right|d\textbf{x}\right| ^{2},\\
 &	I_{3}I_{5}
 	\geq \left| \int_{\mathbb{R}^{2}}  \sum_{k=1}^{2}sin \alpha_ksin \beta_k\rho(\textbf{x}) \left( \frac{\partial}{\partial x_{k}} \rho(\textbf{x})\right) \left|\left( \frac{\partial}{\partial x_{k}} e^{\textbf{u}(\textbf{x}) \theta(\textbf{x})} \right) \left( e^{-\textbf{u}(\textbf{x}) \theta(\textbf{x})} \right)\right|d\textbf{x}\right| ^{2},\\
 	&I_{3}I_{6}
 	\geq \left| \int_{\mathbb{R}^{2}}  \sum_{k=1}^{2}sin \alpha_kcos \beta_kx_{k}\rho^{2}(\textbf{x}) \left|\left( \frac{\partial}{\partial x_{k}} e^{\textbf{u}(\textbf{x}) \theta(\textbf{x})} \right) \left( e^{-\textbf{u}(\textbf{x}) \theta(\textbf{x})} \right)\right|d\textbf{x}\right| ^{2},\\
 	&I_{3}I_{7}
 	\geq \left| \int_{\mathbb{R}^{2}}  \sum_{k=1}^{2}sin \alpha_ksin \beta_k\rho^{2}(\textbf{x}) \left|\left( \frac{\partial}{\partial x_{k}} e^{\textbf{u}(\textbf{x}) \theta(\textbf{x})} \right) \left( e^{-\textbf{u}(\textbf{x}) \theta(\textbf{x})} \right)\right|^{2}d\textbf{x}\right| ^{2}.
 	\end{aligned}
 	\] 	
 The above terms plus \(I_{1}I_{8}, I_{2}I_{8}, I_{3}I_{8},I_{4}I_{5}, I_{4}I_{6}, I_{4}I_{7}, I_{4}I_{8}\), we have the final inequality in Theorem 1.2.

\section{Example}
In this section, we provide an example to show that there indeed exists a function for which the equality holds in Theorem 4.7 and Theorem 1.1.

\begin{ex}
		Consider the function
	\[
	f(\textbf{x})=\frac {1}{\sqrt{\pi \lambda\sin\alpha}}e^{-\frac{1}{2\lambda} \frac{1}{\sin\alpha}\left| \textbf{x}\right| ^{2} }e^{\beta\left| \textbf{x}\right| ^{2}},
	\]
	where $\lambda$ is a positive number, \(\sin \alpha > 0 \) and \(\beta\in S^2\) is a pure quaternion.
	
	Computing directly, we have
	\begin{equation}
	\begin{aligned}
	\int_{\mathbb{R}^{2}} x_{k}^{2}|f(\textbf{x})|^{2} d \textbf{x}
	&=\frac {1}{\pi \lambda\sin\alpha}\int_{\mathbb{R}^{2}}x_{k}^{2}e^{-\frac{1}{\lambda}\frac{1}{\sin\alpha}\left| \textbf{x}\right|^{2}} d\textbf{x}\\
	&=\frac {1}{2}\lambda\sin \alpha.
	\end{aligned}
	\end{equation}
	Since \(\rho (\textbf{x})=\frac {1}{\sqrt{\pi \lambda\sin\alpha}}e^{-\frac{1}{2\lambda} \frac{1}{\sin\alpha}\left| \textbf{x}\right| ^{2} }\), directly computing through Theorem 4.5, we have
	
	\begin{equation}
	\begin{aligned}
	&\int_{\mathbb{R}^{2}} w_{k}^{2}|\mathcal{F}_{\alpha_{1},\alpha_{2}}\{f\}(\textbf{w})|^{2} d \textbf{w}\\
	=&\sin^{2}\alpha\int_{\mathbb{R}^{2}}\left[\frac{\partial}{\partial x_{k}} \rho(\textbf{x})\right]^{2} d \textbf{x}+\cos^{2}\alpha\int_{\mathbb{R}^{2}} x_{k}^{2}\rho^{2}(\textbf{x})d\textbf{x} \\
	& +\sin^{2}\alpha \int_{\mathbb{R}^{2}} \rho^{2}(\textbf{x})\left|\left(\frac{\partial}{\partial x_{k}} e^{\textbf{u}(\textbf{x}) \theta(\textbf{x})}\right)\left(e^{-\textbf{u}(\textbf{x}) \theta(\textbf{x})}\right)\right|^{2} d \textbf{x}\\
	&-2\sin\alpha\cos\alpha\int_{\mathbb{R}^{2}} x_{k}\rho^{2}(\textbf{x})Sc\left[   \left( \frac{\partial}{\partial x_{k}} e^{\textbf{u}(\textbf{x}) \theta(\textbf{x})} \right) \left( e^{-\textbf{u}(\textbf{x}) \theta(\textbf{x})} \right)\right] d\textbf{x}\\
	=&\frac {1}{2\lambda}\sin \alpha+2\lambda\sin ^{3}\alpha+\frac {\lambda}{2}\cos ^{2}\alpha\sin \alpha-2\lambda\cos \alpha\sin ^{2}\alpha.
		\end{aligned}
	\end{equation}
	 By (5.1) and (5.2), We can obtain the left-hand side of (4.1).
	\[
	\begin{aligned}
		& \int_{\mathbb{R}^{2}} x_{k}^{2}|f(\textbf{x})|^{2} d \textbf{x} \int_{\mathbb{R}^{2}} w_{k}^{2}|\mathcal{F}_{\alpha_{1},\alpha_{2}}\{f\}(\textbf{w})|^{2} d \textbf{w} \\
		=&\frac{1}{4}\sin^{2}\alpha+\sin^{2}\alpha\left( \lambda\sin\alpha\right)  ^{2}+\frac{1}{4}\lambda^{2}\sin^{2}\alpha\cos^{2}\alpha-\lambda^{2}\sin^{3}\alpha\cos\alpha.
			\end{aligned}
		\]
		Now we compute the right-hand side of (4.1).
		By computing, it is easy to calculate that \(COV_{x_{k}w_{k}}=\lambda\sin \alpha, Cov_{x_{k}w_{k}}=\beta\lambda\sin \alpha\).
		\[
		\int_{\mathbb{R}^{2}}x_{k}^{2}\rho^{2}(\textbf{x})d\textbf{x}=\frac{1}{2}\lambda\sin\alpha,
		\]
		\[
		\int_{\mathbb{R}^{2}} x_{k}\rho^{2}(\textbf{x})Sc\left[ i\left( \frac{\partial}{\partial x_{k}} e^{\textbf{u}(\textbf{x}) \theta(\textbf{x})} \right) \left( e^{-\textbf{u}(\textbf{x}) \theta(\textbf{x})} \right) \right]d\textbf{x}=\lambda\sin\alpha.
		\]
		We can observe that the left-hand side is equal to the right-hand side of (4.1). That is
		\[
		\begin{aligned}
		& \int_{\mathbb{R}^{2}} x_{k}^{2}|f(\textbf{x})|^{2} d \textbf{x} \int_{\mathbb{R}^{2}} w_{k}^{2}|\mathcal{F}_{\alpha_{1},\alpha_{2}}\{f\}(\textbf{w})|^{2} d \textbf{w} \\
		=&\frac{1}{4}\sin^{2}\alpha+\sin^{2}\alpha\left( \lambda\sin\alpha\right)  ^{2}+\frac{1}{4}\lambda^{2}\sin^{2}\alpha\cos^{2}\alpha-\lambda^{2}\sin^{3}\alpha\cos\alpha\\
		=&\frac{1}{4} \sin^{2}\alpha+ \sin^{2}\alpha COV_{x_{k} w_{k}}^{2}+ \cos^{2}\alpha\left( \int_{\mathbb{R}^{2}}x_{k}^{2}\rho(\textbf{x})^{2}d\textbf{x}\right) ^{2}
			\end{aligned}
		\]
		\[
		\begin{aligned}
		&  -\!2\sin\alpha\cos\alpha\int_{\mathbb{R}^{2}}\!\!x_{k}^{2}\rho^{2}(\textbf{x})d\textbf{x}\!\!\int_{\mathbb{R}^{2}} \!\!x_{k}\rho^{2}(\textbf{x})Sc\left[ i\left( \frac{\partial}{\partial x_{k}} e^{\textbf{u}(\textbf{x}) \theta(\textbf{x})} \right) \left( e^{-\textbf{u}(\textbf{x}) \theta(\textbf{x})} \right) \right]d\textbf{x}\\
		=&\frac{1}{4} \sin^{2}\alpha+ \sin^{2}\alpha\left| \beta\lambda\sin\alpha\right| ^{2}+ \frac{1}{4}\lambda^{2}\sin^{2}\alpha\cos^{2}\alpha-\lambda^{2}\sin^{3}\alpha\cos\alpha\\
		=&\frac{1}{4} \sin^{2}\alpha+ \sin^{2}\alpha\left| Cov_{x_{k}w_{k}}\right| ^{2}+ \cos^{2}\alpha\left( \int_{R^{2}}x_{k}^{2}\rho(\textbf{x})^{2}d\textbf{x}\right) ^{2}\\
		&  -2\sin\alpha\cos\alpha\!\int_{\mathbb{R}^{2}}\!\!x_{k}^{2}\rho^{2}(\textbf{x})d\textbf{x}\!\!\int_{\mathbb{R}^{2}}\!\! x_{k}\rho^{2}(\textbf{x})Sc\left[i \left( \frac{\partial}{\partial x_{k}} e^{\textbf{u}(\textbf{x}) \theta(\textbf{x})} \right) \left( e^{-\textbf{u}(\textbf{x}) \theta(\textbf{x})} \right) \right]d\textbf{x}.
		\end{aligned}
	\]
	Next we compute the left-hand side of (1.5).
	 By (5.1) and (5.2), we get
	\[
	\begin{aligned}
	&\int_{\mathbb{R}^{2}}\left| \textbf{x}\right| ^{2} |f(\textbf{x})|^{2} d \textbf{x}\int_{\mathbb{R}^{2}}\left| \textbf{w}\right| ^{2} |\mathcal{F}_{\alpha_{1},\alpha_{2}}\{f\}(\textbf{w})|^{2} d \textbf{w} \\
	=& \int_{\mathbb{R}^{2}} \sum_{k=1}^{2} x_{k}^{2} \rho^{2} d\textbf{x}  \int_{\mathbb{R}^{2}} \sum_{k=1}^{2} w_{k}^{2} |\mathcal{F}_{\alpha_{1},\alpha_{2}}\{f\}(\textbf{w})|^{2} d\textbf{w} \\
	=&\sin ^{2}\alpha+4\lambda^{2}\sin^{4} \alpha+\lambda^{2}\sin^{2} \alpha\cos^{2} \alpha-4\lambda^{2}\sin^{3} \alpha\cos \alpha.
	\end{aligned}
	\]
		Now we compute the right-hand side of (1.5).By computing, it is easy to see that \(COV_{\textbf{x}\textbf{w}}=2\lambda\sin ^{2}\alpha, Cov_{\textbf{x}\textbf{w}}=2\beta\lambda\sin ^{2}\alpha\), and
		\[
		\int_{\mathbb{R}^{2}} \sum_{k=1}^{2} x_{k}^{2} \rho^{2} d\textbf{x}=\lambda\sin\alpha ,
		\]
		\[
		\int_{\mathbb{R}^{2}}\!\!\sum_{k=1}^{2}x_{k}\rho^{2}Sc \left[e_{k} \left( \frac{\partial}{\partial x_{k}} e^{\textbf{u}(\textbf{x}) \theta(\textbf{x})} \right) \left( e^{-\textbf{u}(\textbf{x}) \theta(\textbf{x})} \right) \right]d\textbf{x}=2\lambda\sin\alpha.
		\]
		We can observe that the left-hand side is equal to the right-hand side of (1.5). That is
	\[
	\begin{aligned}
	&\int_{\mathbb{R}^{2}}\left| \textbf{x}\right| ^{2} |f(\textbf{x})|^{2} d \textbf{x}\int_{\mathbb{R}^{2}}\left| \textbf{w}\right| ^{2} |\mathcal{F}_{\alpha_{1},\alpha_{2}}\{f\}(\textbf{w})|^{2} d \textbf{w} \\
	=& \int_{\mathbb{R}^{2}} \sum_{k=1}^{2} x_{k}^{2} \rho^{2} d\textbf{x}  \int_{\mathbb{R}^{2}} \sum_{k=1}^{2} w_{k}^{2} |\mathcal{F}_{\alpha_{1},\alpha_{2}}\{f\}(\textbf{w})|^{2} d\textbf{w} \\
	=&\sin ^{2}\alpha+4\lambda^{2}\sin^{4} \alpha+\lambda^{2}\sin^{2} \alpha\cos^{2} \alpha-4\lambda^{2}\sin^{3} \alpha\cos \alpha\\
	=&\frac {1}{4}\left( \sin\alpha+\sin \alpha\right)^{2}+\left( 2\lambda\sin^{2}\alpha\right) ^{2}\!+\!\cos^{2} \alpha\lambda^{2}\sin^{2} \alpha\!-\!2\sin \alpha\cos \alpha \cdot \lambda\sin \alpha\cdot 2\lambda\sin \alpha\\
	=&\frac {1}{4}\left( \sin \alpha+\sin \alpha\right)^{2}+COV_{\textbf{x}\textbf{w}}^{2}+\cos^{2}\alpha\left(\int_{\mathbb{R}^{2}} \sum_{k=1}^{2} x_{k}^{2} \rho^{2} d\textbf{x} \right)^{2}\\
	&-\!\!2\sin\alpha\cos\alpha\!\!\int_{\mathbb{R}^{2}} \!\!\sum_{k=1}^{2} x_{k}^{2} \rho^{2} d\textbf{x} \!\! \int_{\mathbb{R}^{2}}\!\!\sum_{k=1}^{2}x_{k}\rho^{2}Sc \left[e_{k} \left( \frac{\partial}{\partial x_{k}} e^{\textbf{u}(\textbf{x}) \theta(\textbf{x})} \right) \left( e^{-\textbf{u}(\textbf{x}) \theta(\textbf{x})} \right) \right]d\textbf{x} \\
		=&\frac {1}{4}\left( \sin \alpha+\sin \alpha\right)^{2}\!+\!\left|  2\beta \lambda\sin^{2}\alpha\right|  ^{2}\!+\!\cos^{2} \alpha\lambda^{2}\sin^{2} \alpha-\!2\sin \alpha\cos \alpha \cdot \lambda\sin \alpha\cdot 2\lambda\sin \alpha
		\end{aligned}
		\]
		\[
		\begin{aligned}
	=&\frac {1}{4}\left( \sin \alpha+\sin \alpha\right)^{2}+\left| Cov_{\textbf{x}\textbf{w}}\right| ^{2}+\cos^{2}\alpha\left(\int_{\mathbb{R}^{2}} \sum_{k=1}^{2} x_{k}^{2} \rho^{2} d\textbf{x} \right)^{2}\\
	&-\!\!2\sin\alpha\cos\alpha\!\!\int_{\mathbb{R}^{2}} \!\sum_{k=1}^{2} x_{k}^{2} \rho^{2} d\textbf{x} \! \int_{\mathbb{R}^{2}}\!\sum_{k=1}^{2}x_{k}\rho^{2}Sc \!\left[e_{k} \left( \frac{\partial}{\partial x_{1}} e^{\textbf{u}(\textbf{x}) \theta(\textbf{x})} \right) \left( e^{-\textbf{u}(\textbf{x}) \theta(\textbf{x})} \right) \right]\!\!d\textbf{x} .
	\end{aligned}
	\]
	Note that, in this case, the uncertainty principle becomes an equality.
	\end{ex}
 \section*{Acknowledgement}
 Haipan Shi is supported by Natural Science Foundation of Zhejiang Province (Grant No.LQN25A010014, Grant No.LMS26A010001) and  China Scholarships Council under No. 202509710010.
 Xiaomin Tang is supported by Natural Science Foundation of Zhejiang Province (Grant No.LZ24A010004).



\begin{flushleft}
	\vspace{6pt}
	Ke Cui\\
	School of Science\\
	Huzhou Normal University\\
	Wuxing District, Huzhou City, Zhejiang Province, 313000, P. R. China\\
	E-mail:cuike1015@163.com\\
	\vspace{4pt}
	Haipan Shi\\
	School of Science\\
	Huzhou Normal University\\
	Wuxing District, Huzhou City, Zhejiang Province, 313000, P. R. China\\
	E-mail:shihaipan226@163.com\\
	\vspace{4pt}
	Xiaomin Tang\\
	School of Science\\
	Huzhou Normal University\\
	Wuxing District, Huzhou City, Zhejiang Province, 313000, P. R. China\\
	E-mail:txm@zjhu.edu.cn
\end{flushleft}

\begin{thebibliography}{10}
	\bibitem{1} Bahri M., Hitzer E., Hayashi A., Ashino R.. An uncertainty principle for quaternion Fourier transform. Comput. Math. Appl, 2008, 56: 2398-2410.
	\bibitem{2} Condon E. U.. Remarks on uncertainty principles. Science, 1929, 69(1796): 573.
	\bibitem{3} Cohen L.. Time-Frequency Analysis: Theory and Applications. Prentice Hall
	Inc., Upper Saddle River, 1995.
	\bibitem{4} Dang P., Deng G. T., Qian T.. A sharper uncertainty principle. J. Funct. Anal, 2013.
	\bibitem{5} Gabor D.. Theory of communication. Part 1: the analysis of information. J. Inst. Elect r. Eng-Part III: Radio Commun. Eng, 1946, 93(26): 429-441.
	\bibitem{6} Grigoryan A. M., Jenkinson J., Agaian S. S.. Quaternion Fourier transform based alpha-rooting method for color image measurement and enhancement. Signal Processing, 2015, 109: 269-289.
	\bibitem{7} Grigoryan A. M., John A., Agaian S. S.. Alpha-rooting color image enhancement method by two-side 2-D quaternion discrete Fourier transform followed by spatial transformation. IJACEEE, 2018.
	\bibitem{8} Heisenberg W.. Uber den anschaulichen Inhalt der quanten theoretischen Kinematikund Mechanik. Z. Phys, 1927, 43: 172-198.
	\bibitem{9} Hitzer E. M. S.. Directional uncertainty principle for quaternion Fourier transform. Adv. Appl. Cli?ord Algebr, 2010, 20: 271-284.
	\bibitem{10} Li Z., Shi H. P., Qiao Y.. Two-sided fractional quaternion Fourier transform and its application. J Inequal Appl. 2021, 2021: 121.
	\bibitem{11} Nicewarner K. E., Sanderson A. C.. A General Representation for Orientational Uncertainty Using Random Unit Quaternions. Proceedings of IEEE International Conference on Robotics and Automation, 1994, 1161-1168.
	\bibitem{12} Sudbery, A.. Quaternionic analysis. Math. Proc. Camb. Philos. Soc., 1979, 85: 199-225.
	\bibitem{13} Took C. C., Mandic D. P.. The quaternion LMS algorithm for adaptive ?ltering of hypercomplex processes. Signal Processing, 2009, 57(4): 1316-1327.
	\bibitem{14} Took C. C., Mandic D.P.. Augmented second-order statistics of quaternion random signals. Signal Processing, 2011, 91(2): 214-224.
	\bibitem{15} Weyl H.. The Theory of Groups and Quantum Mechanics. E. P. Dutton and Co., 1931.
	\bibitem{16} Yang Y., Dang P., Qian T.. Tighter Uncertainty Principles Based on Quaternion Fourier Transform. Adv. Appl. Clifford Algebras, 2016, 26: 479-497.
\end{thebibliography}
\end{document}